\newcommand{\R}{{\operatorname{right}}}
\renewcommand{\L}{{\operatorname{left}}}
\renewcommand{\O}{{\operatorname{outer}}}
\newtheorem{lemma}{Lemma}[section]
\newtheorem{proposition}[lemma]{Proposition}
\newtheorem{remark}[lemma]{Remark}
\numberwithin{equation}{section}
\newcommand{\Ri}{\Romanbar{1}}
\newcommand{\Ro}{\Romanbar{2}}
\newcommand{\Rright}{\Romanbar{3}}
\newcommand{\Rleft}{\Romanbar{4}}
    \crefname{figure}{Figure}{Figures}
    \Crefname{figure}{Figure}{Figures}
    \crefname{table}{Table}{Tables}
    \Crefname{table}{Table}{Tables}
    \crefname{section}{\S}{\S}
    \Crefname{section}{\S}{\S}
    \crefname{equation}{}{}
    \Crefname{equation}{}{}
    \crefname{remark}{Remark}{Remarks}
    \Crefname{remark}{Remark}{Remarks}
\title{Construction and implementation of asymptotic expansions for Laguerre--type orthogonal polynomials}
\author{
Daan Huybrechs \\ daan.huybrechs@cs.kuleuven.be\\
Department of Computer Science\\
KU Leuven, Belgium
\and
Peter Opsomer (corresponding author) \\ peter.opsomer@cs.kuleuven.be \\
Department of Computer Science\\
KU Leuven, Belgium
}
\begin{document}

\maketitle

\begin{abstract}
Laguerre and Laguerre-type polynomials are orthogonal polynomials on the interval $[0,\infty)$ with respect to a weight function of the form
\begin{equation}
  w(x) = x^{\alpha} e^{-Q(x)}, \quad Q(x) = \sum_{k=0}^m q_k x^k, \quad \alpha > -1, \quad q_m > 0. \nonumber
\end{equation}
The classical Laguerre polynomials correspond to $Q(x)=x$. The computation of higher-order terms of the asymptotic expansions of these polynomials for large degree becomes quite complicated, and a full description seems to be lacking in literature. However, this information is implicitly available in the work of Vanlessen \cite{Vanlessen}, based on a non-linear steepest descent analysis of an associated so-called Riemann--Hilbert problem. We will extend this work and show how to efficiently compute an arbitrary number of higher-order terms in the asymptotic expansions of Laguerre and Laguerre-type polynomials. This effort is similar to the case of Jacobi and Jacobi-type polynomials in a previous paper. 
We supply an implementation with explicit expansions in four different regions of the complex plane. These expansions can also be extended to Hermite-type weights of the form $\exp(-\sum_{k=0}^m q_k x^{2k})$ on $(-\infty,\infty)$, and to general non-polynomial functions $Q(x)$ using contour integrals. The expansions may be used, e.g., to compute Gauss-Laguerre quadrature rules in a lower computational complexity than based on the recurrence relation, and with improved accuracy for large degree. They are also of interest in random matrix theory.
\end{abstract}

\section{Introduction}\label{Sintro}

We determine asymptotic approximations as $n \rightarrow \infty$ of the orthonormal polynomials $p_n(x)$ on $[0,\infty)$ with positive leading coefficient, with the weight function 
\[
  w(x) = x^{\alpha} e^{-Q(x)}, \quad Q(x) = \sum_{k=0}^m q_k x^k, \quad \alpha > -1, \quad q_m > 0.
\]
The classical Laguerre polynomials corresponds to $Q(x)=x$, but we aim to provide formulas and results for general functions $Q(x)$. The choice $Q(x)=x^m$ corresponds to so-called Freud-type polynomials~\cite{Levin2001weights}.

The procedure in \cite{Vanlessen} gives four types of asymptotic expansions: (\Ri) inner asymptotics for $x$ near the bulk of the zeros but away from the extreme zeros, (\Ro) outer asymptotics valid for $x$ away from the zeros of $p_n(x)$, (\Rright) boundary asymptotics near the so-called soft edge valid for $x$ near the largest zeros and (\Rleft) boundary asymptotics near the so-called hard edge for $x$ near $0$. We also provide asymptotic expansions for associated quantities such as leading term coefficients as well as recurrence coefficients $a_n$ and $b_n$ of the three term recurrence relation
\begin{equation}\label{recur}
	b_n p_{n+1}(x) = (x-a_n)p_{n}(x) - b_{n-1}p_{n-1}(x).
\end{equation}

The methodology of \cite{Vanlessen} is based on the non-linear steepest descent method by Deift and Zhou \cite{dzsd} for a $2 \times 2$ Riemann--Hilbert problem that is generically associated with orthogonal polynomials by Fokas, Its and Kitaev \cite{fik}. This is further detailed in \cref{SRH}. 
The general strategy is to apply a sequence of transformations $Y(z)\mapsto T(z) \mapsto S(z) \mapsto R(z)$, such that  the final matrix-valued function $R(z)$ is asymptotically close to the identity matrix as $n$ or $z$ tends to $\infty$. 
The asymptotic result for $Y(z)$, and subsequently for the polynomials, is obtained by inverting these transformations. The transformations involve a normalization of behaviour at infinity, the so-called `opening of a lens' 
around the interval of orthogonality, and the introduction of local parametrices in disks around special points like endpoints, which are matched to global parametrices elsewhere in the complex plane. These transformations split $\mathbb{C}$ into different regions, where different formulas for the asymptotics are valid.

In our case of Laguerre-type polynomials, one also first needs an $n$-dependent rescaling of the $x$ axis using the so-called MRS numbers (defined further on in this paper). After this step the roots of the rescaled polynomials accumulate in a fixed and finite interval. We provide an algorithm to obtain an arbitrary number of terms in the expansions, where we set up series expansions using many convolutions that follow the chain of transformations and their inverses in this steepest descent method. While doing this, we keep computational efficiency in mind, as well as the use of the correct branch cuts in the complex plane.

The strategy outlined above and in \cref{SRH} was also followed in our earlier article about asymptotic expansions of Jacobi--type polynomials \cite{jacobi}, which was based on the mathematical analysis of Kuijlaars et al in \cite{KMcLVAV}. Here, we base our results on the analysis in the work of Vanlessen \cite{Vanlessen}. The main differences in this paper compared to \cite{jacobi} are the following:
\begin{itemize}
\item The analysis of Laguerre-type polynomials on the halfline $[0,\infty)$ has an extra step that involves a rescaling via the MRS numbers $\beta_n$ (see \cref{s:MRS}). This leads to fractional powers of $n$ in the expansions.
\item There is also a new behaviour near the largest zero (often referred to as a soft edge), captured by the Airy function. This leads to higher order poles in the derivations and thus also to longer formulas for the higher order terms in the expansions. The behaviour near the hard edge at $x=0$ involves Bessel functions, like in the Jacobi case near the endpoints $\pm 1$.
\item We obtain more explicit results for polynomials $Q(x)$. We will frequently distinguish in this paper between three cases: monomial $Q$, general polynomial $Q$ and a more general analytic function $Q$.
\end{itemize} 

Asymptotic expansions can be useful in computations for several reasons. The use of the recurrence relation \eqref{recur} in applications involving Laguerre polynomials results in accumulating roundoff errors 
and a computation time that is linear in $n$. In contrast, asymptotic expansions become increasingly accurate as the degree $n$ becomes large, and the computing time is essentially independent of $n$.  Another motivation is the partition function for Laguerre ensembles of random matrices as mentioned in \cite{Vanlessen} and studied in \cite{ZhaoPartition}. This gives the eigenvalue distribution of products of random matrices of a certain type, which could for example arise in stochastic processes (Markov chains) or quantum mechanics.

In this reference, some leading order terms are given explicitly, and we detail the derivation of higher-order terms. The analysis only requires elementary numerical techniques: in particular, there is no need for the evaluation of special functions besides the Airy and Bessel functions. The formulas are implemented in {\sc Sage} and {\sc Matlab} and are available on the software web page of our research group \cite{ninesLag}.

The expressions are also implemented in the Chebfun package in Matlab for computing with functions (see \cite[lagpts.m]{chebfun}) and into the Julia package FastGaussQuadrature \cite[gausslaguerre.jl]{FastGaussQuadr}, in both cases for the purpose of constructing Gauss-Laguerre quadrature rules with a high number of points in linear complexity. This approach is comparable to a number of modern numerical methods for other types of Gaussian quadrature, that are often based on asymptotics and that lead to a linear complexity as well~\cite{glaser,bogaert,HT,BogaertIterationFree}. A recent paper \cite{bremer} achieves competitive performance in a general way via nonoscillatory phase functions. Potential improvements to our code in light of this result and a more thorough discussion of the contributions to the computation of Gaussian quadrature rules are future research topics. We do remark here that the construction of Gaussian quadrature rules requires the derivative of the associated orthonormal Laguerre 
polynomial with positive leading coefficient. This can be obtained from our expansions and the identity $d p_n^{(\alpha)}(x)/dx = \sqrt{n}p_{n-1}^{(\alpha+1)}(x)$, derived from \cite[18.9.23]{DLMF}. 
Although technical, the expansions can readily be differentiated for general $Q(x)$. This paper affirmatively answers the questions raised in the conclusions of \cite{TTOGauss}, namely whether the RH approach can be applied to the fast computation of quadrature rules with generalized Laguerre weights (for the Jacobi case, see \cite{jacobi}), and whether higher order asymptotic expansions can be computed effectively.

As mentioned, the standard Laguerre polynomials correspond to $Q(x) = x$, or equivalently $q_k \equiv 0$, $\forall k \neq 1$, and $q_1=1$. For this case, asymptotic expansions are given in \cite{alfLag} with explicit expressions for the first terms. We refer the reader to \cite{temme1990,temme2004} and references therein for more results on asymptotics for the standard Laguerre polynomials. A recent scheme for the numerical evaluation of Laguerre polynomials of any degree is described in \cite{gil2016}.

As an example, the type of expansions in this paper have the following form. For the monic Laguerre polynomial $\pi_n$ of degree $n$, we obtain:
\begin{align} 
	\pi_n(x) = p_n(4nz) &= \frac{(4n)^{n} e^{n (2z-1-2\log(2))} }{z^{1/4}(1-z)^{1/4} z^{\alpha/2} } \begin{pmatrix} 1 \\ 0 \end{pmatrix}^T R^{\O}(z)  \\ 
	&  \begin{pmatrix} 2^{-\alpha} \cos(\arccos(2z-1)[1/2+\alpha/2] - n[2\sqrt(z)\sqrt{1-z}-2\arccos(\sqrt{z}) ] -\pi  /4)  \\  -i 2^{\alpha} \cos(\arccos(2z-1)[\alpha/2-1/2] - n[2\sqrt(z)\sqrt{1-z}-2\arccos(\sqrt{z}) ] -\pi /4) \end{pmatrix}.  \nonumber
\end{align}
This is expression \eqref{EpiInt} of the paper, specified to the standard associated Laguerre weight $w(x)=x^{\alpha}e^{-x}$. It is valid for $x \in (0,4n)$, where $x$ is related to $z$ through the MRS number $\beta_n = 4n$, i.e., $x=4nz$. The expansion itself follows from substituting the expansion of the $2 \times 2$ matrix function $R^{\O}(z)$. The leading order term is obtained from the identity matrix $R^{\O}(z)=I$, and further terms are listed explicitly in Appendix \ref{APP:explicit}. We provide formulas and their implementation for an arbitrary number of terms in the asymptotic expansions, and compute up to $50$ terms in $32$ seconds on the architecture mentioned in \cref{SnumerGen}. 

These formulas can also be applied to obtain asymptotic expansions of orthogonal polynomials with Hermite-type weights of the form $\exp(-\sum_{k=0}^m q_k x^{2k})$ on $(-\infty,\infty)$. In \cref{Sherm}, we show that they can be given in terms of asymptotics of Laguerre-type polynomials with $\alpha = \pm 1/2$, evaluated in $x^2$.

We aim for a general non-polynomial weight function $Q(x)$, though our results in this case are thus far not rigorously valid. In particular, we do not provide estimates for the remainder term. We do provide numerical indications that the expansions converge at the expected rate for increasing $n$. Inspired by the requirements for the Jacobi case \cite{jacobi} and the technical conditions on $Q(x)$ in \cite[\S 1]{Levin2001weights}, we conjecture that the expansions in this paper are valid as long as $Q(x)$ is analytic within the contours defined further on and $Q(x)$ grows faster than powers of $\log(x)$ for $x\rightarrow \infty$.

The structure of the paper is as follows. In \cref{Sasy}, we connect the Riemann-Hilbert problem for orthogonal polynomials that is analyzed in \cite{Vanlessen} with the expansion of a $2 \times 2$ matrix-valued function $R$ and introduce some notation. 
We detail the Mhaskar-Rakhmanov-Saff (MRS) numbers $\beta_n$ and their asymptotic expansions for large $n$ in \S\ref{s:MRS}. The formulas for the asymptotic expansions of the polynomials in the different regions of the complex plane are stated in \S\ref{s:asymptotics}. We explain the computation of higher order terms of $R$ in \S\ref{S:higherorderterms} and provide a non-recursive definition for $R$. Details on obtaining explicit expressions for higher order terms are provided in \S\ref{SexplL}. We conclude the paper with a number of examples and numerical results in \S\ref{Snum}.

\section{Asymptotic expansions for Laguerre--type polynomials}\label{Sasy}

The largest root of a Laguerre-type polynomial $p_n(x)$ grows with the degree $n$. 
For example, it asymptotically behaves as $4n+2\alpha+2 + 2^{2/3}a_1(4n+2\alpha+2)^{1/3}$ for the standard associated Laguerre polynomials, with $a_1$ the (negative) zero of the Airy function closest to zero, see \cite[(18.16.14)]{DLMF,Olver:2010:NHMF} and \cite[(6.32.4)]{Szego}. 
The first step in the description of the asymptotics is to rescale the polynomials, such that the support of the zero-counting measure maps to the interval $[0,1]$. The scaling is linear but $n$-dependent and given by
\begin{equation}\label{eq:scaling}
x = \beta_n z,
\end{equation}
where $\beta_n$ is the Mhaskar-Rakhmanov-Saff (MRS) number~\cite{Levin2001weights} defined further on in \eqref{EbetaInt}.

There is a distinction between several regions in the complex $z$-plane, shown in Figure \ref{Fregions}:
\begin{itemize}
 \item a complex neighbourhood of the interval $(0,1)$ excluding the endpoints, subsequently called the `lens' (region \Ri)
 \item two disks around the endpoints $1$ and $0$, called the right and left disk (regions \Rright~and \Rleft)
 \item and the remainder of the complex plane, the `outer region' (region \Ro).
\end{itemize}

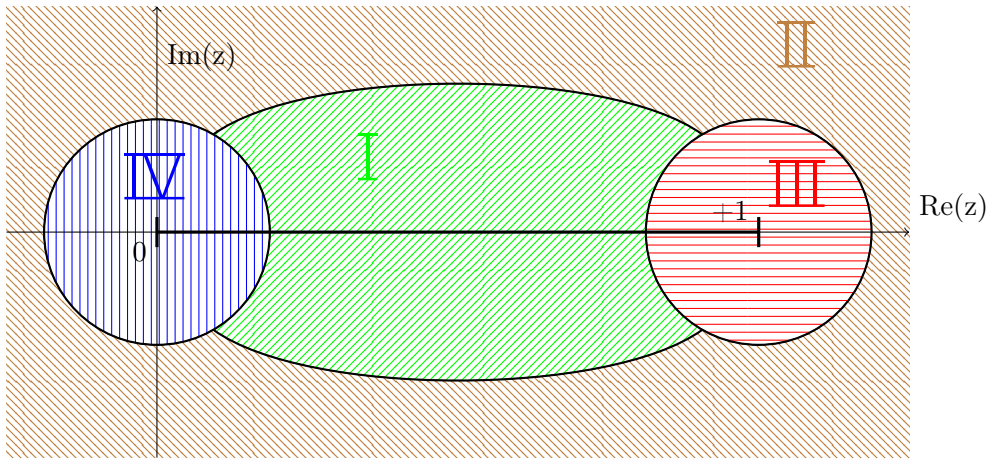
\begin{figure}[t]
\begin{center}
\begin{tikzpicture}
	\fill[pattern=north west lines, pattern color =brown] (0,4) rectangle (12, -2);
	\fill[white] (2.75,2.32) rectangle (9.25, -0.32);
	\draw (2,1) [fill=white] circle [radius=1.5];
	\draw (10,1) [fill=white] circle [radius=1.5];
	\draw[fill=white] (2.75, 2.3) .. controls(4,3.2) and (8, 3.2) .. (9.25, 2.3);
	\draw[fill=white] (2.75, -0.3) .. controls(4,-1.2) and (8, -1.2) .. (9.25, -0.3);

	\fill[pattern=north east lines, pattern color =green] (2.75,2.3) rectangle (9.25, -0.3);
	\draw (2,1) [fill=white] circle [radius=1.5];
	\draw (10,1) [fill=white] circle [radius=1.5];
	\draw (10,1) [thick,pattern=horizontal lines, pattern color =red] circle [radius=1.5];
	\draw (2,1) [thick,pattern=vertical lines, pattern color =blue] circle [radius=1.5];
	\draw[thick,pattern=north east lines, pattern color =green] (2.75, 2.3) .. controls(4,3.2) and (8, 3.2) .. (9.25, 2.3);
	\draw[thick,pattern=north east lines, pattern color =green] (2.75, -0.3) .. controls(4,-1.2) and (8, -1.2) .. (9.25, -0.3);

	\draw[very thick] (2,1)--(10,1);
	\draw[->] (2,-2)--(2,4);
	\draw (2,3) node[above right] {Im(z)};
	\draw[->] (0,1)--(12,1);
	\draw (12,1) node[above right] {Re(z)};
	\draw (2,1) node[below left] {0};
	\draw (10,1) node[above left] {+1};
	\draw[very thick] (2,0.8)--(2,1.2);
	\draw[very thick] (10,0.8)--(10,1.2);
	\draw[green] (4.8,2) node {\Huge{\Ri } };
	\draw[brown] (10.5,3.5) node {\Huge{\Ro } };
	\draw (10,1.2) node[red,above right] {\Huge{\Rright } };
	\draw (2.5,1.3) node[above left,blue] {\Huge{\Rleft } };
\end{tikzpicture} 
\end{center}
\caption{Regions of the complex plane in which the polynomials have different asymptotic expansions, after rescaling the support of the zero-counting measure to the interval $[0,1]$: the lens (\Ri, a complex neighbourhood of the interval $(0,1)$ excluding the endpoints), the outer region (\Ro, the remainder of the complex plane) and the right and left disks (\Rright~and \Rleft, two disks around the endpoints $0$ and $1$).}
\label{Fregions}
\end{figure}

\subsection{Riemann--Hilbert formulation and steepest descent analysis} \label{SRH}

In this section, we briefly summarize the main features of the derivation in \cite{Vanlessen}. The approach is based on the Riemann--Hilbert formulation for orthogonal polynomials \cite{fik}: we seek a $2\times 2$ complex matrix-valued function $Y(z)$ that satisfies the following Riemann--Hilbert problem (RHP), cf. \cite[\S 3]{Vanlessen}:
\begin{enumerate}
	\item[(a)] $Y$ : $\mathbb{C}\setminus[0,\infty) \rightarrow \mathbb{C}^{2 \times 2}$ is analytic.
	\item[(b)] $Y(z)$ has continuous boundary values $Y_{\pm}(x)$, when going from the upper half-plane through the interval $(0,\infty)$ to the lower half-plane, respectively. These boundary values are related via a jump matrix:
	\begin{equation}
		Y_+(x) = Y_-(x) \begin{pmatrix}1 & x^\alpha e^{-Q(x)} \\ 0 & 1\end{pmatrix}, \quad x \in (0,\infty).\nonumber
	\end{equation}
 	\item[(c)] $Y(z)\begin{pmatrix} z^{-n} & 0\\ 0 & z^{n}\end{pmatrix} = I+\mathcal{O}\left(\frac{1}{z}\right), \qquad z\to\infty$.
	\item[(d)] The behaviour as $z \rightarrow 0$ is also specified, see \cite[(3.3)]{Vanlessen}. 
\end{enumerate}

It is proved in \cite{fik,KuijLect}, that the unique solution of this Riemann--Hilbert problem is 
\begin{equation}
	Y(z) = \begin{pmatrix} p_n(z)/\gamma_n & \frac{1}{2\pi i\gamma_n}\int_{0}^\infty \frac{p_n(x) w(x)}{x-z} dx \\ -2\pi i \gamma_{n-1} p_{n-1}(z) & -\gamma_{n-1}\int_{0}^\infty \frac{p_{n-1}(x) w(x)}{x-z} dx \end{pmatrix}. \nonumber
\end{equation}
Here, the $Y_{11}$ entry is the monic orthogonal polynomial. The $Y_{21}$ entry relates to the polynomial of degree $n-1$, while the second column contains the Cauchy transforms of both of these polynomials. Note that the weight function of the orthogonal polynomials enters through the jump condition in (b). 

In order to obtain the large $n$ asymptotic behavior of $p_n(x)$, the Riemann--Hilbert formulation is combined with the Deift--Zhou steepest descent method for Riemann--Hilbert problems \cite{DZ,dzsd}. In this case, the steepest descent analysis presented in \cite{Vanlessen} consists of the following sequence of (explicit and invertible) transformations:
\begin{equation}
	Y(z)\mapsto T(z) \mapsto S(z) \mapsto R(z). \nonumber
\end{equation}
These steps have a well-defined interpretation:
\begin{itemize} 
 \item The first step is a normalization at infinity, such that
\begin{equation}
	T(z) = I+\mathcal{O}(1/z), \qquad z\rightarrow \infty. \label{ETinf} 
\end{equation}
This step comes at the cost of introducing rapidly oscillating entries in the new jump matrix for the Riemann-Hilbert problem for $T$.

\item The second step is the opening of the so-called lens around $[0,1]$: it factorizes the previous jump matrix such that $S(z) = T(z)$ outside of the lens in Figure \ref{Fregions}, while $S(z)$ is exponentially close to $T(z)$ in $n$ in the upper and lower part of the lens. The shape of the lens is such that the oscillating entries on the diagonal in the jump matrix are transformed into exponentially decaying off-diagonal entries.

\item Finally, the last transformation $S(z) \mapsto R(z)$ gives rise to the disks in \cref{Fregions} and is defined as
	\begin{equation}
		R(z)=S(z)
		\begin{cases}
			P_n(z)^{-1}, \qquad \text{for z close to }1, \\
			\tilde{P}_n(z)^{-1}, \qquad \text{for z close to }0, \\
			P^{(\infty)}(z)^{-1}, \qquad \text{elsewhere.}
		\end{cases} \label{EjumpR}
	\end{equation}
	Here, a global parametrix $P^{(\infty)}(z)$ is introduced and constructed using the Szeg\H{o} function $z^{\alpha/2}\varphi(z)^{-\alpha/2}$
 (to be defined below). $P_n(z)$ and $\tilde{P}_n(z)$ are local parametrices that follow from a rather involved local analysis around the endpoints. We omit the details, but we note that $\tilde{P}_n(z)$ is given explicitly in terms of standard Bessel and Hankel functions and their derivatives. The precise choice of these functions is made in such a way that $\tilde{P}_n(z)$ satisfies a matching condition with the global parametrix on the boundary of the left disk, namely 
	\begin{equation}
		\tilde{P}_n(z)\left[P^{(\infty)}(z)\right]^{-1} = I + \Delta^{\L}(z), \nonumber
	\end{equation}
	where $\Delta^{\L}(z)$ will be defined in \cref{ss:localjump} as the jump matrix for $R(z)$. A similar construction yields explicit expressions for $P_n(z)$ in terms of the Airy function and its derivative. Since we know $P_n(z)$, $\tilde{P}_n(z)$ and $P^{(\infty)}(z)$ explicitly, we can determine the asymptotic expansion of $\Delta^{\R/\L}(z)$ in a closed formula.
\end{itemize}


The key idea is that the Riemann--Hilbert problem for $R(z)$ can be solved explicitly in an asymptotic sense for large $n$: it can be deduced that the matrix $R(z)$ is itself close to the identity
\begin{equation}
	R(z) = I+\mathcal{O}\left(\frac{1}{n}\right), \qquad n\to\infty, \nonumber
\end{equation}
uniformly for $z \in \mathbb{C} \setminus \Sigma_R$. Here, $\Sigma_R$ is a contour that results from the sequence of transformations outlined before, and consists of the boundaries of the regions in Figure \ref{Fregions}. If we match all powers of $z$ and $n$ in \cref{EjumpR} via $\Delta_k^{\R/\L}(z)$, we obtain higher order terms in the asymptotic expansions, which is exactly the technique outlined in \cref{S:higherorderterms}. Finally, reversing these transformations (since the different RHP are equivalent), one can obtain asymptotic information for $Y(z)$ as $n\to\infty$ in different sectors of the complex plane, and in particular of the $(1,1)$ entry. 

\subsection{The function $R(z)$ in the complex plane} \label{SfctR}

The function $R(z)$ is a $2\times 2$ matrix complex--valued function, analytic (element-wise) in $\mathbb{C}\setminus\Sigma_R$, where $\Sigma_R$ consists of the boundaries of the regions in Figure \ref{Fregions}. Also, $R_k(z) = \mathcal{O}(1/z)$ for $z \to \infty$. Finally, as $n\to\infty$ for polynomial $Q(x)$ with degree $m$ and independent of $n$, 
there exist functions $R_k(z)$ 
such that the function $R(z)$ admits an asymptotic expansion of the form
\begin{equation}\label{asympRn}
		R(z)\sim I + \sum_{k=1}^{\infty} \frac{R_k(z)}{n^{\tfrac{k-1}{m}+1} }, \qquad n \rightarrow \infty.
\end{equation}

We obtain different expressions for $R_k(z)$ depending on the region in which $z$ lies. We will write $R_k^{\R}(z)$ and $R_k^{\L}(z)$ to refer to the coefficients for $z$ near $1$ and $0$ respectively, and $R_k^{\O}(z)$ to indicate the coefficients for $z$ outside these two disks. Our formulas for the asymptotic expansions are written in terms of these functions. One may simply substitute $R(z)=I$ to obtain the leading order behaviour of the expansion. Higher-order expansions are obtained via recursive computation of the $R_k(z)$ in \S \ref{S:higherorderterms}, or alternatively using the explicit expressions listed in \S \ref{APP:explicit}.

\subsection{Auxiliary functions}\label{ss:auxiliary}

We recall some terminology and notation from \cite{Vanlessen}. In the formulation of our results we use the third Pauli matrix $\sigma_3 = \left[ \begin{array}{cc} 1 & 0 \\ 0 & -1 \end{array}\right]$ and define for $a \in \mathbb{C} \setminus \{0\}$:
\[
 a^{\sigma_3} = \left[ \begin{array}{cc} a & 0 \\ 0 & a^{-1} \end{array}\right].
\]
The following values $A_k$ arise in the recursive construction of the MRS numbers:
\[
	A_k =  \int_0^1 \frac{x^{k-1/2}}{\pi\sqrt{1-x}} dx \quad = \prod_{j=1}^k \frac{2j-1}{2j} \quad = \frac{\Gamma(k+1/2)}{\sqrt{\pi}\Gamma(k+1)} \quad = 4^{-k} {2k \choose k}, 
\]
and we will also use the \emph{Pochhammer symbol} or \emph{rising factorial}
\[
	(n)_j =  n(n+1)\cdots(n+j-1). \nonumber
\]

We will define the MRS number $\beta_n$ and the associated quantities $h_n(z), H_n(z)$ and $l_n$ in \cref{Smrs}. The corresponding scaling \eqref{eq:scaling} gives rise to the rescaled field $V_n(z)$:
\[
	V_n(z) = Q(\beta_n z)/n.
\]
We also define the following functions: 
\begin{align}
	\theta(z) &= \begin{cases} 1,& \quad \arg(z-1) > 0, \\ -1, & \quad \arg(z-1)\leq 0, \end{cases} \nonumber \\
	\xi_n(z) &= -i \left(H_n(z)\sqrt{z}\sqrt{1-z}/2 -2\arccos(\sqrt{z})\right), \label{Exin} \\
	f_n(z) &= n^{2/3}(z-1)\left[\frac{-3\theta(z)\xi_n(z)}{2(z-1)^{3/2}}\right]^{2/3}, \nonumber \\
	\bar{\phi}_n(z) &= \xi_n(z)/2 -\pi i/2. \label{Ephibar}
\end{align}
The function $\theta(z)$ is non-standard in literature on asymptotics, but it is introduced here because it allows the statement of analytic continuations of some functions using standard branch cuts. We assume standard branch cuts of all analytic functions in this paper, such that the formulas are easily implemented. One may call $\xi_n(z)$, $f_n(z)$ and $\bar{\phi}_n(z)$ \emph{phase functions} for the orthonormal polynomials. They specify the oscillatory behaviour of $p_n(x)$ for $z$ respectively away from the endpoints, near $1$ and near $0$. Here, too, one can avoid specifying select branch cuts by not simplifying the definition of $f_n(z)$. The function $\bar{\phi}_n(z)$ corresponds to $\sqrt{\tilde{\varphi}_n(z)}$ in \cite{Vanlessen} and is used for the analytic continuation of the polynomial in the left disk.

The conformal map $\varphi(z)$ from $\mathbb{C} \setminus [0,1]$ onto the exterior of the unit circle is used in the global parametrix $P^{(\infty)}(z)$, which determines the behaviour of $p_n(x)$ away from $x=0$ and $\beta_n$:
\begin{align}
	\varphi(z) &= 2z-1+2\sqrt{z}\sqrt{z-1} \quad = \exp(i \theta(z) \arccos(2z-1)), \label{Evarphi} \\
	P^{(\infty)}(z) &= \frac{2^{-\alpha\sigma_3} }{2 z^{1/4}(z-1)^{1/4}} \begin{pmatrix} \sqrt{\varphi(z) } &  \frac{i}{\sqrt{\varphi(z) } } \\ \frac{-i}{\sqrt{\varphi(z) } } & \sqrt{\varphi(z) } \end{pmatrix}   \left(\frac{z^{\alpha/2}}{(\varphi(z))^{\alpha/2}}\right)^{-\sigma_3 }. \label{EPinf}
\end{align}
Finally, the coefficients $\nu_k$ and $(\alpha,m)$ appear in asymptotics of Airy and modified Bessel functions in the local parametrices:
\begin{align}
	\nu_k &= \left(1-\frac{6k+1}{6k-1}\right) \frac{\Gamma(3k+1/2)}{54^k (k!) \Gamma(k+1/2)} \quad = \frac{-\Gamma(3k-1/2)2^k}{2k 27^k \sqrt{\pi}\Gamma(2k)}, \nonumber \\
	(\alpha,m) &= \begin{cases} 1, & \quad m = 0, \\ 2^{-m} (m!)^{-1} \prod_{n=1}^{m}(4 \alpha^2-(2n-1)^2), & \quad m > 0. \end{cases} \nonumber 
\end{align}

\section{MRS numbers and related functions}
\label{s:MRS}
The Mhaskar-Rakhmanov-Saff numbers $\beta_n$ satisfy~\cite{Vanlessen}
\begin{equation}
	2\pi n = \int_0^{\beta_n} Q'(x)\sqrt{\frac{x}{\beta_n-x}}dx. \label{EbetaInt}
\end{equation}
We will explain how to compute these and quantities dependent on them for various types of $Q(x)$: monomials, more general polynomials and more general analytic functions.

\subsection{Monomial $Q(x)$}

If $Q(x)$ is monomial ($Q(x) = q_m x^m + q_0$), then \cite{Vanlessen}
\[
	\beta_n = n^{1/m}\left(m q_m A_m /2\right)^{-1/m}.
\]
We also recall from \cite{Vanlessen} the coefficients $l_n$ and polynomials $H_n$ with a slight adjustment for $l_n$:
\begin{align}
	l_n = & -2/m-4\ln(2) -q_0/n, \nonumber \\
	H_n(z) = & \frac{4}{2m-1} {}_2F_1(1,1-m;3/2-m;z) \quad = \frac{2}{mA_m}\sum_{k=0}^{m-1} A_{m-1-k}z^k. \label{EHn} 
\end{align}
In the classical Laguerre case where $w(x) = x^\alpha e^{-x}$, we have
\begin{align}
	H_n(z) &= 4, \nonumber \\ 
	\beta_n &= 4n. \nonumber
\end{align}
The latter value for $\beta_n$ is well-known and it implies that the largest root of the Laguerre polynomial of degree $n$ grows approximately like $4n$.

\subsection{General polynomial $Q(x)$} \label{Smrs}
For general polynomial $Q(x)$, $\beta_n$ has an asymptotic expansion with fractional powers,
\begin{equation}
	\beta_n \sim n^{1/m} \sum_{k=0}^\infty \beta^{1,k}n^{-k/m}. \label{EbetaExpa}
\end{equation}
To compute the coefficients $\beta^{1,k}$, we start from the equation at the end of the proof of \cite[Prop 3.4]{Vanlessen}:
\begin{equation}
	\sum_{k=0}^m \tfrac k2 q_k A_k \left. \frac{d^j}{d\epsilon^j}(\beta(\epsilon)^k\epsilon^{m-k})\right|_{\epsilon =0} \quad = \quad 0 \quad = \quad \sum_{k=\max(m-j,1)}^m k q_k A_k \beta^{k,j-m+k}, \label{EstartBeta} 
\end{equation}
where we have defined
\begin{equation}
	\beta(\epsilon)^k \sim \left(\sum_{l=0}^\infty \beta^{1,l} \epsilon^l \right)^k \sim \sum_{l=0}^\infty \beta^{k,l} \epsilon^{l}, \qquad \beta^{k,l} = \sum_{i=0}^l \beta^{k-1,i} \beta^{1,l-i}. \label{Ebetaa}
\end{equation}
One uses the result \cite[(3.8)]{Vanlessen}
\begin{equation}
	\beta^{1,0} = (m q_m A_m/2)^{-1/m}, \label{Ebeta0} 
\end{equation}
and then recursively computes \eqref{Ebetaa} for $l\leq j=0$. Next, \cref{EstartBeta} leads to
\begin{align} 
	\beta^{1,j} = & -\left\{  m q_m A_m\left(\left[\left(\beta^{1,0}\right)^{m-2} \sum_{i=1}^{j-1} \beta^{1,j-i} \beta^{1,i}\right]  + \sum_{i=0}^{m-3} (\beta^{1,0})^i \sum_{k=1}^{j-1} \beta^{1,j-k} \beta^{m-1-i,k} \right) \right. \nonumber \\
	 & \left. + \sum_{k=\max(m-j,1)}^{m-1} k q_k A_k \beta^{k,j-m+k} \right\} \left(m^2 q_m A_m [\beta^{1,0}]^{m-1}\right)^{-1} \nonumber
\end{align}
for $j=1$ and so on. We see that $q_0$ does not influence the MRS number, since it only rescales the weight function. The construction of $\beta_n$ from this section satisfies the condition \cref{EbetaInt} asymptotically up to the correct order and also the following explicit result from \cite[(3.8)]{Vanlessen}:
\begin{equation}
	\beta^{1,1} = \frac{-2(m-1)q_{m-1}}{m(2m-1)q_m}. \label{Ebeta1}
\end{equation}
With these results, we can compute the polynomials $H_n(x)$ and the coefficients $l_n$ as \cite[\S 3]{Vanlessen}: 
\begin{align}
	H_n(z) = & \sum_{k=0}^{m-1} z^k\sum_{j=k+1}^m \frac{q_j}{n} \beta_n^j A_{j-k-1} \quad \sim \sum_{k=0}^{m-1} z^k\sum_{j=k+1}^m q_j A_{j-k-1} n^{j/m-1} \sum_{i=0}^\infty \beta^{j,i} n^{-i/m}, \label{EHn_nonmon} \\ 
	l_n = & -4\log(2) -\sum_{k=0}^m \frac{q_k}{n}\beta_n^k A_k. \nonumber
\end{align}

\subsection{General function $Q(x)$} \label{SmrsNonpoly}

For the calculation of the functions in case $Q(x)$ is not a polynomial, we introduce a numerical method. We provide an initial guess $\beta_n^{(0)}$ that satisfies $n=Q(\beta_n^{(0)})$ 
to an iterative numerical procedure, so $\beta_n^{(0)} = Q^{-1}(n)$. 
The procedure finds a $\beta_n$ that approximately satisfies \cref{EbetaInt}, where the integral is computed by numerical integration. If needed, $Q^{-1}(n)$ and $Q^\prime(x)$ can be approximated numerically as well. The other functions are given by integrals, \cite[(3.11-16-38-40-24)]{Vanlessen} 
\begin{align}
	h_n(z) &=  \frac{1}{2\pi i}\oint_{\Gamma_z} \frac{\sqrt{y}V_n^\prime(y) dy}{\sqrt{y-1}(y-z)}, \nonumber \\
	\xi_n(z) &= \frac{-1}{2} \int_1^z \frac{\sqrt{y-1}}{\sqrt{y}} h_n(y) dy, \nonumber \\
	l_n &=  2\int_0^1 \frac{\log(|1/2-y|)\sqrt{1-y}}{2\pi\sqrt{y}}h_n(y) dy -\frac{Q(\beta_n/2)}{n}. \nonumber
\end{align}
The contour $\Gamma_z$ for $h_n$ should enclose the interval $[0,1]$ and the point $z$. We choose $\Gamma_z$ to be a circle with a center halfway between the interval $[0,1]$ and $z$, while still including $z$ and the interval. The integrals for $\xi_n(z)$ and $l_n$ are also calculated numerically, so the former is computed by a double numerical integral.

\begin{remark} \label{Rgen}
These expressions are also valid for polynomial $Q$. However, following the reasoning in this subsection only leads to a numerical value of $\beta_n$ for a given $n$, as opposed to a full asymptotic expansion of $\beta_n$ in fractional powers of $n$. The same observation holds for the functions defined above. In this case, the powers $n^{-1/m}$ are implicitly present in all quantities that involve $\beta_n$, while the results for polynomial $Q$ are more explicit. We compare both approaches further in \cref{RgenCont,SnumerGen}.
\end{remark}

\subsection{Explicit expressions satisfied by the MRS numbers}

Thus far we have obtained either asymptotic expansions of $\beta_n$ or a numerical estimation. The cases in which explicit expressions can be derived are limited, but in this section we aim to provide some more helpful expressions.

Inspired by \cref{Evarphi}, we invert that conformal map by changing the coordinates $x=(\varphi+1)^2\beta_n/4/\varphi$ in integral \cref{EbetaInt}:
\begin{equation}
	\frac{8\pi i n}{\beta_n} = \int_\Upsilon Q^\prime(x)\frac{(\varphi+1)^2}{\varphi^2}  d\varphi.
\end{equation}
The contour $\Upsilon$ is half the unit circle, starting at $\varphi = -1$ through $i$ to $1$. Note that $x$ is real-valued for $\varphi$ on this halfcircle in the upper half of the complex plane. Hence, it is also real-valued when we take the complex conjugate of $\varphi$, corresponding to $\varphi$ on the halfcircle in the lower half of the complex plane. If we assume that $Q^\prime$ is real for real arguments, then the integral on the negative halfcircle, i.e. from $-1$ through $-i$ to $1$, is the complex conjugate of the integral above. Combining both, we find that
\[
 \frac{16\pi i n}{\beta_n} = \int_\Xi Q^\prime(x)\frac{(\varphi+1)^2}{\varphi^2}  d\varphi,
\]
where $\Xi$ is a circle enclosing the origin $\varphi=0$ in the counterclockwise direction.

The described change of variables maps a point $\varphi$ in the interior of the unit circle to $x \in \mathbb{C} \setminus [0,\beta_n]$. If we $Q$ is not entire, we need to substract additional residues; else, the contour $\Xi$ encloses a single pole at the origin. In the case where $Q(x)$ is a polynomial of degree $m$, we obtain from the residue theorem that $\beta_n$ is the root of a polynomial of degree $m$: 
\begin{align}
	\frac{8n}{\beta_n}  &= \text{Res} \left( \frac{(\varphi+1)^2}{\varphi^2} \sum_{k=1}^m k q_k \left(\frac{(\varphi+1)^2\beta_n}{4\varphi}\right)^{k-1}, \quad \varphi=0 \right), \\
	\frac{8n}{\beta_n}  &= \left[\sum_{k=2}^{m} k q_k \left(\frac{\beta_n}{4}\right)^{k-1} {2k-2 \choose k-2}\right] + 2\left[\sum_{k=1}^m k q_k \left(\frac{\beta_n}{4}\right)^{k-1}  {2k-2 \choose k-1} \right] + \left[\sum_{k=2}^m k q_k \left(\frac{\beta_n}{4}\right)^{k-1}  {2k-2 \choose k} \right], \\
	8n &= 2q_1\beta_n + 4\sum_{k=2}^{m} k {2k \choose k} q_k \left(\frac{\beta_n}{4}\right)^k. \label{EexplBeta}
\end{align}

We can remark that \cref{EbetaExpa} gives the asymptotic expansion of the zero of the $m$-th degree polynomial \cref{EexplBeta} in $\beta_n$ with respect to a factor in its constant coefficient. Exact solutions for $\beta_n$ are only available up to $m=4$. For $m=1$, this boils down to the standard associated Laguerre case $\beta_n=4n/q_1$. For $m=2$ we take the positive solution which also corresponds to \cref{Ebeta0,Ebeta1}:
\begin{equation}
	\beta_n = \frac{-q_1+\sqrt{q_1^2+24q_2 n}}{3q_2} \sim \sqrt{\frac{8n}{3q_2}} -\frac{q_1}{3q_2} +\sqrt{\frac{q_1^4}{864q_2^3 n}} + \mathcal{O}(n^{-3/2}),
\end{equation}

We do find an explicit result for the non-polynomial function $Q(x) = \exp(x)$. In that case, we have 
\begin{align}
	8n &\sim \beta_n \text{Res} \left( \frac{1}{\varphi} \sum_{k=0}^\infty \frac{1}{k!} \left(\frac{\beta_n}{4}\right)^k \left(\varphi^{-1} +2 +\varphi \right)^{k+1}, \varphi=0 \right) \\
	8n &\sim  \beta_n \sum_{k=0}^\infty \frac{1}{k!} \left(\frac{\beta_n}{4}\right)^k {2k+2 \choose k+1} \nonumber \\
	8n &=   2\beta_n \exp\left(\frac{\beta_n}{2}\right)\left[I_0\left(\frac{\beta_n}{2}\right) + I_1\left(\frac{\beta_n}{2}\right) \right] \label{EexponQbeta} \\
	4n &\sim  e^{\beta_n} \sqrt{\frac{\beta_n}{\pi}} \left[2 - \frac{1}{2\sqrt{\beta_n}} \right] \qquad \Rightarrow \beta_n \sim W(8\pi n^2)/2 \sim \log(n)-\log(\log[8\pi n^2])/2+\log(8\pi)/2, \label{EbetaExp} 
\end{align} 
where $W$ denotes the Lambert-W function, and $I_0$ and $I_1$ are modified Bessel functions. For general $Q(x)$, a similar technique may allow one to find an explicit expression satisfied by $\beta_n$ like \cref{EexponQbeta} without integrals. Solving that expression numerically avoids having to evaluate the integral \cref{EbetaInt}. 
However, it might become quite involved to derive higher order terms as explicitly as in \cref{SprecLnonmon,SprecRnonmon} from the resulting expansion of $\beta_n$ as $n \rightarrow \infty$.

\section{Asymptotics of orthonormal polynomials $p_n(x)$ and related coefficients}
\label{s:asymptotics}

\subsection{Lens \Ri } \label{Sint}

Putting together the consecutive transformations in \cite{Vanlessen}, for $z \in $ \Ri~in \cref{Fregions} and $x$ and $z$ related as in $x=\beta_n z$, we obtain 
\begin{align} 
	p_n(\beta_n z) &= \frac{\beta_n^{n}\gamma_n e^{n (V_n(z)+l_n)/2} }{z^{1/4}(1-z)^{1/4} z^{\alpha/2} } \begin{pmatrix} 1 \\ 0 \end{pmatrix}^T R^{\O}(z)  \label{EpiInt} \\
	&  \begin{pmatrix} 2^{-\alpha} \cos(\arccos(2z-1)[1/2+\alpha/2] + n\xi_n(z)/i -\pi  /4)  \\  -i 2^{\alpha} \cos(\arccos(2z-1)[\alpha/2-1/2] + n\xi_n(z)/i -\pi /4) \end{pmatrix}.  \nonumber
\end{align}
The asymptotics of $\gamma_n$ are given in \cref{Sgamman}. The full asymptotic expansion of $p_n(x)$ is obtained by substituting the expansion for $R^{\O}(z)$ that we derive later on.

The asymptotic expansions of the orthonormal polynomials all separate two oscillatory terms (phase functions multiplied by $n$) from the non-oscillatory higher order terms. For polynomial $Q(x)$ of degree $m$, the asymptotic expansion truncated after $T$ terms correspond to a relative error of size $\mathcal{O}(n^{-T/m})$. In the special case of a monomial $Q(x) = q_m x^m +q_0$, the relative error improves to $\mathcal{O}(n^{-T})$.

\subsection{Outer region \Ro } \label{Sout}

For $z \in$ \Ro, the asymptotic expansion is
\begin{align} 
	p_n(\beta_n z) &= \frac{\beta_n^{n}\gamma_n e^{n (V_n(z)/2+\theta(z)\xi_n(z)+l_n/2)} \exp(i\theta(z)\arccos(2z-1)\alpha/2) }{2 z^{1/4}(z-1)^{1/4}z^{\alpha/2}}\begin{pmatrix}  1 \\ 0 \end{pmatrix}^T R^{\O}(z)   \label{EpiOut} \\
	&  \begin{pmatrix} 2^{-\alpha} \exp(i\theta(z)\arccos(2z-1)/2)    \\  -i 2^{\alpha}\exp(-i\theta(z)\arccos(2z-1)/2).  \end{pmatrix} \nonumber
\end{align}
It may appear to be problematic that $\exp(Q(\beta_n z)/2) = \exp(n V_n(z)/2)$ appears in the asymptotic expansions of the polynomials in the complex plane, especially for this region, since this factor grows very quickly. However, one may verify that this exponential behaviour is canceled out with other terms. More specifically, the term $\exp(n\xi_n(z))$ in \eqref{EpiOut} ensures that $p_n(z) = \mathcal{O}(z^n)$, $z \rightarrow \infty$.

\subsection{Right disk \Rright } \label{Sboun}

The polynomials behave like an Airy function near the right endpoint $z=1$ ($z \in \Rright$). This is typical asymptotic behaviour near a so-called `soft edge', in the language of random matrix theory. Note that the $\theta(z)$ in the following expression removes the branch cut, so that it can be used throughout $\mathbb{C}$, away from $z=0$: 
\begin{align} 
	p_n(\beta_n z) &= \gamma_n \beta_n^{n} \frac{z^{-\alpha/2} \sqrt{\pi}}{z^{1/4} (z-1)^{1/4} } e^{n(V_n(z) + l_n)/2} \begin{pmatrix} 1 \\ 0 \end{pmatrix}^T R^{\R}(z)  \label{Epiboun} \\
	& \begin{pmatrix} 2^{-\alpha}\left\{\cos\left[\frac{(\alpha+1)\arccos(2z-1)}{2} \right] Ai(f_n(z)) f_n(z)^{1/4}  -i\sin\left[\frac{(\alpha+1)\arccos(2z-1)}{2} \right] Ai'(f_n(z))f_n(z)^{-1/4} \theta(z)\right\} \\ 2^\alpha \left\{-i\cos\left[\frac{(\alpha-1)\arccos(2z-1)}{2} \right] Ai(f_n(z)) f_n(z)^{1/4} -\sin\left[\frac{(\alpha-1)\arccos(2z-1)}{2}\right] Ai'(f_n(z)) f_n(z)^{-1/4} \theta(z) \right\}\end{pmatrix} \nonumber
\end{align}

We would like to note a possible issue when computing the zeros of this expression, as one would do for Gaussian quadrature. The largest root for the standard associated Laguerre polynomials asymptotically behaves as $4n+2\alpha+2 + 2^{2/3}a_1(4n+2\alpha+2)^{1/3}$, with $a_1$ the (negative) zero of the Airy function closest to zero, see \cite[(18.16.14)]{DLMF,Olver:2010:NHMF} and \cite[(6.32.4)]{Szego}
. For a fixed but relatively high $\alpha$, this point may lie outside the support of the equilibrium measure $(0,4n)$ \cite[Rem. 3.8]{Vanlessen}. There will always be a larger $n$ for which the point lies inside. Still, for large $\alpha$ one may want to pursue a different kind of asymptotic expansion, for example using asymptotics with a varying weight $x^{\alpha(n)}\exp(-Q(x))$, as studied in for example \cite{varying}, and apply it to the fixed $\alpha$.

\subsection{Left disk \Rleft } \label{Slboun}

The polynomials behave like a Bessel function of order $\alpha$ near the left endpoint $z=0$. For $z \in $ \Rleft, we obtain
\begin{align} 
	& p_n(\beta_n z) = \gamma_n \beta_n^{n} \frac{(-1)^n \left(in\bar{\phi}_n(z)\pi\right)^{1/2} }{z^{1/4} (1-z)^{1/4}} z^{-\alpha/2} e^{n(V_n(z) + l_n)/2} \begin{pmatrix} 1 \\ 0 \end{pmatrix}^T R^{\L}(z)   \label{Epilboun} \\
	& \begin{pmatrix} 2^{-\alpha} \left\{ \sin\left[\frac{(\alpha+1)\arccos(2z-1)}{2} -\frac{\pi\alpha}{2}\right] J_\alpha\left(2in \bar{\phi}_n(z)\right) + \cos\left[\frac{(\alpha+1)\arccos(2z-1)}{2} -\frac{\pi\alpha}{2}\right] J_\alpha'\left(2in \bar{\phi}_n(z)\right) \right\} \\ -i 2^\alpha \left\{ \sin\left[ \frac{(\alpha-1)\arccos(2z-1)}{2} -\frac{\pi\alpha}{2}\right] J_\alpha\left(2in \bar{\phi}_n(z)\right) +  \cos\left[\frac{(\alpha-1)\arccos(2z-1)}{2} -\frac{\pi\alpha}{2}\right] J_\alpha'\left(2in \bar{\phi}_n(z)\right) \right\} \end{pmatrix}. \nonumber
\end{align}

It is not immediately obvious that the expansions \eqref{Epiboun} and \eqref{Epilboun} are analytic in the points $z=1$ and $z=0$ respectively. This will follow from the expression \cref{ERrlseries} for $R^{\L/\R}(z)$ and by also making a series expansion of the other terms at those points. For numerical purposes, it may be better to use those series expansions when evaluating close to (or at) $z=0$ and $z=1$.

\subsection{Asymptotics of leading order coefficients} \label{Sgamman}

The leading order coefficient of the orthonormal polynomials is $\gamma_n$, i.e. we have
\[
p_n(x) = \gamma_n\pi_n(x)
\]
where $\pi_n(x)$ is the monic orthogonal polynomial of degree $n$. For a monomial or more general function $Q(x)$, the asymptotic expansion of $\gamma_n$ is 
\begin{align} 
	\gamma_n & \sim \frac{\beta_n^{-n-\alpha/2-1/2}\exp(-n l_n/2) \sqrt{\frac{2}{\pi}} 2^{\alpha} } {\sqrt{1-4i4^{\alpha} \sum_{k=1}^\infty \frac{(U_{k,1}^{\R}+U_{k,1}^{\L} )|_{1,2} } {n^k} } }. \label{Egamma}
\end{align}
The quantities $U_{k,1}^{\R/\L}$ are defined and extensively described in \S\ref{S:higherorderterms}. They are the constant $2\times 2$ matrices that multiply $z^{-1}n^{-k}$ and $(z - 1)^{-1} n^{-k}$ in the expansion for $R(z)$, of which we use the lower left elements here. Explicit expressions for these matrices up to $k=3$ are given in Appendix \ref{APP:explicit}. The constant coefficient $q_0$ only changes the scaling of the weight function and does not influence $\beta_n$ nor the matrices. However, it does influence $\gamma_n$ through the coefficient $l_n$, giving $\gamma_n \sim \exp(q_0/2)$.

For general polynomial $Q(x)$, the power of $n$ changes from $k$ to $(k-1)/m+1$: 
\begin{align} 
	\gamma_n & \sim \frac{\beta_n^{-n-\alpha/2-1/2}\exp(-n l_n/2) \sqrt{\frac{2}{\pi}} 2^{\alpha} } {\sqrt{1-4i4^{\alpha} \sum_{k=1}^\infty \frac{(U_{k,1}^{\R}+U_{k,1}^{\L} )|_{1,2} } {n^{(k-1)/m+1} } } } . \nonumber
\end{align}
This reflects the more accurate asymptotic information about $\beta_n$ available for polynomial $Q$. It is understood here that one substitutes the asymptotic expansion of $\beta_n$ in this formula. We retain this formulation here to show the analogy with \cref{Egamma}.

\subsection{Asymptotics of recurrence coefficients}
In the three term recurrence relation \eqref{recur}, the recurrence coefficients have the following large $n$ asymptotic expansion 
\begin{align}
	& a_n \sim \beta_n \left[\frac{-\alpha}{4} + \sum_{k=1}^\infty \frac{(U_{k,1}^{\R}+U_{k,1}^{\L} )|_{1,1} }{n^k}  + \right. \nonumber \\
	 & \left. \frac{ \frac{4^{-\alpha}i(\alpha+2)}{16} + \left(\sum_{k=3}^\infty\frac{U_{k,2}^{\L}|_{1,2} }{n^k} \right) + \left(\sum_{k=1}^\infty\frac{(U_{k,2}^{\R}+U_{k,1}^{\R} )|_{1,2} }{n^k} \right) + \left(\sum_{k=1}^\infty \frac{(U_{k,1}^{\R}+U_{k,1}^{\L} )|_{1,1} 4^{-\alpha-1}i + (U_{k,1}^{\R}+U_{k,1}^{\L} )|_{1,2}\alpha/4 }{n^k}  \right) } {4^{-\alpha-1}i + \left(\sum_{k=1}^\infty \frac{(U_{k,1}^{\R}+U_{k,1}^{\L} )|_{1,2} }{n^k}\right)   } \right] \nonumber
\end{align}
and
\begin{align}\nonumber 
	b_{n-1} \sim \frac{\beta_n}{4} & \left[1+4i\left(\sum_{k=1}^\infty \frac{(U_{k,1}^{\R}+U_{k,1}^{\L} )|_{2,1}4^{-\alpha} -4^\alpha (U_{k,1}^{\R}+U_{k,1}^{\L} )|_{1,2}    }{n^k}\right)  +  \right. \nonumber \\
	 & \left. 16\left(\sum_{k=1}^\infty \frac{(U_{k,1}^{\R}+U_{k,1}^{\L} )|_{2,1} }{n^k}\right) \left(\sum_{k=1}^\infty \frac{(U_{k,1}^{\R}+U_{k,1}^{\L} )|_{1,2} }{n^k}\right) \right]^{1/2}.
\end{align}
The quantities $U_{k,1}^{\R/\L}$ in these expressions are the same as those appearing in \eqref{Egamma} above. For general polynomial $Q(x)$, the powers of $n$ again change from $k$ to $(k-1)/m+1$ and the expression is otherwise unchanged.

\section{Computation of higher-order terms}\label{S:higherorderterms}

The literature on Riemann-Hilbert problems suggests a way to compute the asymptotic expansion of $R(z)$. In principle, it is clear how expressions can be obtained but this involves many algebraic manipulations, summations and recursion. An important contribution of \cite{jacobi} was to identify a set of simplifications that significantly improve the efficiency of numerical evaluation of the expressions. Similar simplifications can be performed in the current setting of Laguerre-type polynomials, though the expressions are of course very different.

\subsection{Jumps of $R(z)$}\label{ss:localjump}

The main idea to obtain higher-order terms in the asymptotic expansion for $\pi_n(z)$ is to compute the higher-order terms $R_k(z)$ in \eqref{asympRn}. To that end, we recall that $R$ satisfies a Riemann-Hilbert problem with jumps across the contours shown in \cref{Fregions}. We proceed as in \cite{jacobi} by writing the jump matrix for $R(z)$ as a perturbation of the identity matrix, $I+\Delta(z)$. Starting from \cite[(3.108)]{Vanlessen} we have, for $z$ on the boundary $\Sigma_R$ of one of the disks as shown in \cref{Sasy},
\begin{equation}\label{Ejump2}
	R^{\O}(z) = R^{\R/\L}(z)(I+\Delta^{\R/\L}(z)), \qquad z \in \Sigma_R.
\end{equation}
We then consider a full asymptotic expansion in powers of $1/n$ for $\Delta(z)$:
\begin{equation}\nonumber
   \Delta(z) \sim \sum_{k=1}^{\infty} \frac{\Delta_k(z)}{n^k}, \qquad n\to\infty.
\end{equation}
On the boundary of the disks, the terms $\Delta_k(z)$ can be written explicitly as $\Delta_k^{\R/\L}(z)$ \cite[(3.76), (3.98)]{Vanlessen}: 
\begin{align}
	\Delta_k^{\R}(z) & = \frac{P^{(\infty)}(z)z^{\alpha/2\sigma_3}}{2\left(- \xi_n(z) \right)^k} \begin{pmatrix} (-1)^k\nu_k & -6k i\nu_k \\ 6k i(-1)^k\nu_k & \nu_k \end{pmatrix} z^{-\alpha/2\sigma_3} P^{(\infty)}(z)^{-1}, \quad 0 < |z-1| < \delta_2 \\
	\Delta_k^{\L}(z)& = \frac{(\alpha,k-1)}{\left(4\bar{\phi}_n(z) \right)^k}P^{(\infty)}(z) (-z)^{\alpha/2\sigma_3}    \begin{pmatrix} \tfrac{(-1)^k}{k}(\alpha^2+\tfrac{k}{2} -\tfrac{1}{4}) & \left(k-\tfrac{1}{2}\right)i \\ (-1)^{k+1}\left(k-\tfrac{1}{2}\right)i & \tfrac{1}{k}(\alpha^2 +\tfrac{k}{2} -\tfrac{1}{4}) \end{pmatrix} (-z)^{-\alpha/2\sigma_3} P^{(\infty)}(z)^{-1}, 
\end{align} 
with $0 < |z| < \delta_3$ for some sufficiently small $\delta_2$ and $\delta_3 > 0$. However, for a general polynomial or function $Q(x)$, the $\Delta_k^{\R/\L}(z)$ are also dependent on $n$. 
We will extract the $n$-dependence explicitly for general polynomial $Q(x)$, and use contour integrals for each required $n$ otherwise. $\Delta_k^{\L}(z)$ has poles of order at most $\lceil k/2 \rceil$ at $z=0$ \cite[Rem. 3.29]{Vanlessen} as in the Jacobi case \cite{jacobi}, but $\Delta_k^{\R}(z)$ has poles of order at most $\lceil 3k/2 \rceil$ at $z=1$ \cite[Rem. 3.22]{Vanlessen}. 
The $\Delta_k(z)$ are identically $0$ on the other boundaries of the regions in \cref{Fregions}. 

\begin{remark}\label{Rspec}
If $\alpha^2 = 1/4$ as in the Hermite case (see \cref{Sherm}), then $\Delta^{\L}_k(z)$ and $s^{\L}_k(z)$ are zero matrices for $k > 1$ and $\Delta^{\L}_1(z)$ and $s^{\L}_1(z)$ have a Taylor series starting with $\mathcal{O}(1)$ near $z=0$. So, all $U_{k,m}^{\L}$ are zero matrices and can be left out of the calculation of higher order terms, which is still needed as the $U_{k,m}^{\R}$ are not zero. 
\end{remark}

\subsection{Recursive computation of $R_k(z)$ for monomial $Q(x)$} \label{ScompU}

In this case, there are no fractional powers of $n$ involved, and we can renumber \cref{asympRn} to simplify the formulas:
\begin{equation}
  \label{asympRnmon}
  R(z)\sim I + \sum_{k=1}^{\infty} \frac{R_k(z)}{n^{k} }, \qquad n \rightarrow \infty. 
\end{equation}
By expanding the jump relation \eqref{Ejump2} and collecting the terms with equal order in $n$, we obtain a link between the terms $R_k(z)$ in the expansion \eqref{asympRn} and the $\Delta_k$. For $z$ on the boundary of the disks in \cref{Fregions}, we have 
\begin{equation}\label{RHPforRk}
	R_k^{\O}(z)=R_k^{\R/\L}(z)+\sum_{j=1}^k R^{\R/\L}_{k-j}(z)\Delta^{\R/\L}_j(z)
\end{equation}
with $R_0^{\R/\L}(z)=I$. One can solve the additive Riemann-Hilbert problem as follows:
\begin{itemize}
 \item Expand the sum in \eqref{RHPforRk} in a Laurent series around $z=0$ and $1$.
 \item Define $R_k^{\O}(z)$ as the sum of all the terms containing strictly negative powers of $z$ and $(z- 1)$. Since $R_k(z) = \mathcal{O}(1/z)$ as $z \rightarrow \infty$, positive powers do not contribute to $R_k^{\O}(z)$.
 \item Define $R^{\R}_k(z)$ as the remainder after subtracting those poles.
\end{itemize}
This construction ensures that $R_k^{\O}$ is analytic outside the disk, $R^{\R}_k$ is analytic inside and \eqref{RHPforRk} holds, as required. According to \cite[Rem. 3.22 \& 3.29]{Vanlessen}, we may write
\begin{equation}\label{Vkm}
	\Delta_k^{\R/\L}(z) \sim \sum_{i=-\lceil 3k/2 \rceil }^\infty V_{k,i}^{\R/\L} (z -1/2 \mp 1/2)^i, 
\end{equation}
with $V_{k,p}^{\L} \equiv 0$ for all $p < -\lceil k/2 \rceil$. Note that $V_{k,i}^{\R/\L}$ are the Laurent coefficients of $\Delta_k^{\R/\L}$ around $z=1$ and $z=0$ respectively. 
With $U_{k,p}^{\L} \equiv 0$ for all $p < -\lceil k/2 \rceil$, this yields
\begin{equation}\label{ERpl}
	R_{k}^{\O}(z) =\sum_{p=1}^{\lceil 3k/2 \rceil } \left(\frac{U_{k,p}^{\R} }{(z-1)^p} + \frac{U_{k,p}^{\L} }{z^p} \right).
\end{equation}
At the same time, since $R^{\R/\L}_k(z)$ are analytic in z=$1$ (respectively $0$), 
\begin{equation}
	R^{\R}_k(z) \sim \sum_{n=0}^\infty Q_{k,n}^{\R} (z-1)^n, \quad 	R^{\L}_k(z) \sim \sum_{n=0}^\infty Q_{k,n}^{\L} z^n, \label{ERrlseries} 
\end{equation}
with some coefficients $Q^{\R/\L}_{k,n}$ that can be determined as well, for example via symbolic differentiation. It follows from the additive jump relation \eqref{RHPforRk}, that
\begin{align}
	U_{k,p}^{\R/\L} = & \hspace{1.5mm} V_{k,-p}^{\R/\L} + \sum_{j=1}^{k-1}\sum_{l=0}^{\lceil 3j/2 \rceil -p} Q_{k-j,l}^{\R/\L} V_{j,-p-l}^{\R/\L}, \label{EUpole}  \\
	Q_{k,n}^{\R/\L} = & \left(\sum_{i=1}^{\lceil 3k/2 \rceil} {-i \choose n} (\pm 1)^{i+n} U_{k,i}^{\L/\R} \right)  \\
	& -V_{k,n}^{\R/\L} -\sum_{j=1}^{k-1}\sum_{l=0}^{\lceil 3j/2 \rceil +n} Q_{k-j,l}^{\R/\L} V_{j,n-l}^{\R/\L}. \nonumber
\end{align}
Here, the $(- 1)^{i+n} U_{k,i}^{\R}$ corresponds to the $Q_{k,n}^{\L}$. In \cref{Ssimpl}, we will explore an alternative way to compute the matrices $U_{k,m}^{\R/\L}$.

\subsection{Recursive computation of $R_k(z)$ for general polynomial $Q(x)$}

For general polynomial $Q(x)$, the $\Delta_k(z)$ are also dependent on $n$, so we need fractional powers of $n$. We introduce Laurent coefficients with an extra index, indicating the power of $n^{-1/m}$,
\begin{equation}
\label{Vkil}
  \Delta^{\R/\L}_k(z) \sim \sum_{l=0}^\infty \left[\sum_{i=-\lceil 3k/2 \rceil}^\infty V_{k,i,l}^{\R/\L}(z -1/2 \mp 1/2)^{i} \right] n^{-l/m}. 
\end{equation}

In this case, we do have a general expansion for $R$ in terms of fractional powers, given earlier by \eqref{asympRn}. We arrive at Taylor series and Laurent expansions of the form:
\begin{align}
	R^{\R/\L}_k(z) &\sim  \sum_{n=0}^\infty Q_{k,n}^{\R/\L} (z -1/2 \mp 1/2)^n, \label{ERQnonmon} \\
	R^{\O}_k(z) &\sim \sum_{p=1}^{\lceil 3/2 \lceil k/m\rceil \rceil } \left(\frac{U_{k,p}^{\R} }{(z-1)^p} + \frac{U_{k,p}^{\L} }{z^p} \right).
\end{align}
Here, $p$ corresponds to the order of the pole and must be $\leq \lceil 3/2 \lceil k/m\rceil \rceil$. This is because that is the highest order of the pole of the $\Delta^{\L/\R}_q(z)$ matrices which appear in the expansion up to $\mathcal{O}\left(n^{\tfrac{k-1}{m}+1}\right)$ of the jump relation
\begin{align}
	I + & \sum_{k=1}^{\infty} \frac{R_k^{\O}(z)}{n^{\tfrac{k-1}{m}+1} } = \left(I + \sum_{k=1}^{\infty} \frac{R_k^{\R/\L}(z)}{n^{\tfrac{k-1}{m}+1} }\right) \left(I+\sum_{q=1}^{\infty} \frac{\Delta_q(z)}{n^q}\right).
\end{align}

Expanding near $z=0$ or $1$ and collecting terms with the same (fractional) power of $n$ in the jump relation \cref{Ejump2}, we obtain, after some more algebraic manipulations,
\begin{align} 
	U_{k,p}^{\R/\L} = & \left[\sum_{q=\lceil (2p-1)/(2\pm 1) \rceil}^{(k-1)/m +1} V_{q,-p,k-1+m-qm}^{\R/\L}  \right] + \left[ \sum_{q=1}^{(k-1)/m} \sum_{l=0}^{k-1-mq} \sum_{i=-\lceil (2\pm 1)q/2 \rceil}^{-p} Q_{k-l-mq,-i-p}^{\R/\L} V_{q,i,l}^{\R/\L} \right], \\
	Q_{j,n}^{\R/\L} = & \left[\sum_{p=1}^{\lceil (2 \mp 1)/2 \lceil j/m\rceil \rceil } U_{j,p}^{\L/\R} {-p \choose n} (\pm 1)^{n-p}  \right] - \left[ \sum_{q=1}^{(j-1)/m+1} V_{q,n,j-1+m+qm}^{\R/\L} \right] \\
	& -\left[ \sum_{q=1}^{(j-1)/m} \sum_{l=0}^{j-1-qm} \sum_{i=-\lceil (2 \pm 1) q/2 \rceil}^n Q_{j-l-qm,n-i}^{\R/\L} V_{q,i,l}^{\R/\L} \right].
\end{align}

\subsection{Simplifications} \label{Ssimpl}
We start by writing the jump relation \eqref{RHPforRk} using the coefficients $R^{\O}_{k-m}(z)$ instead of $R^{\R/\L}_{k-m}(z)$. 
\begin{proposition} The jump relation \eqref{RHPforRk} can be written as follows:
\begin{equation}\label{E:simplifiedjump}
	R^{\R/\L}_k(z) = R_k^{\O}(z) - \sum_{l=1}^k R_{k-l}^{\O}(z)s^{\R/\L}_l(z)
\end{equation}
with $R_{0}^{\R/\L}(z) = I$ and with
\begin{equation}\label{EcheckS}
   s^{\R/\L}_l(z) = \Delta^{\R/\L}_l(z) - \sum_{j=1}^{l-1} s^{\R/\L}_j(z)\Delta^{\R/\L}_{l-j}(z).
\end{equation} 
\end{proposition}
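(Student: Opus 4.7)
The proof proceeds by recognizing \eqref{RHPforRk} as a formal power-series identity in $1/n$ and inverting the relevant factor. Rewriting \eqref{Ejump2} as $R^{\O}(z) = R^{\R/\L}(z)(I + \Delta^{\R/\L}(z))$ and treating both sides as asymptotic series in $1/n$, the claim \eqref{E:simplifiedjump} amounts to
$$R^{\R/\L}(z) \sim R^{\O}(z)\bigl(I + \Delta^{\R/\L}(z)\bigr)^{-1},$$
where the inverse is taken in the ring of formal power series in $n^{-1}$ with matrix coefficients, and where the coefficients $s_l^{\R/\L}(z)$ are exactly the negatives of the series coefficients of $(I+\Delta^{\R/\L})^{-1} - I$.

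The plan is then to introduce a formal series $S(z) \sim \sum_{l \geq 1} s_l^{\R/\L}(z)/n^l$ defined by the left-inverse condition $(I - S)(I + \Delta^{\R/\L}) = I$. Expanding this product and extracting the coefficient of $n^{-k}$ (with the convention $s_0^{\R/\L} = 0$) gives
$$\Delta_k^{\R/\L}(z) - s_k^{\R/\L}(z) - \sum_{j=1}^{k-1} s_j^{\R/\L}(z)\,\Delta_{k-j}^{\R/\L}(z) = 0,$$
which is exactly \eqref{EcheckS}. Because $I + \Delta^{\R/\L}$ has leading term $I$ in the indeterminate $1/n$, it is a unit in the formal-series ring, so its left and right inverses coincide; consequently $(I+\Delta^{\R/\L})(I-S) = I$ as well.

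With $S$ in hand, substituting $R^{\R/\L}(z) = R^{\O}(z)(I - S(z))$ as formal series and collecting the coefficient of $n^{-k}$, together with the leading identity $R_0^{\O}(z) = I$, yields
$$R_k^{\R/\L}(z) = R_k^{\O}(z) - \sum_{l=1}^{k} R_{k-l}^{\O}(z)\,s_l^{\R/\L}(z),$$
which is precisely \eqref{E:simplifiedjump}. To confirm compatibility with the original recursion, one multiplies this expression on the right by $I + \sum_{j\geq 1}\Delta_j^{\R/\L}/n^j$, uses $(I-S)(I+\Delta^{\R/\L}) = I$ from the previous paragraph, and collects the $n^{-k}$ coefficient to recover \eqref{RHPforRk}; this is a short bookkeeping step that proves the two formulations equivalent.

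The main obstacle is keeping the non-commuting matrix products in the correct order. In particular, the recursion \eqref{EcheckS} arises from the left-inverse expansion and formally differs from the right-inverse recursion $s_k^{\R/\L} = \Delta_k^{\R/\L} - \sum_{j=1}^{k-1} \Delta_j^{\R/\L} s_{k-j}^{\R/\L}$; both define the same $S$ by uniqueness of the two-sided inverse, but only the former matches the ordering demanded by \eqref{E:simplifiedjump}, where the $s_l^{\R/\L}$ must appear to the right of $R_{k-l}^{\O}$. Beyond this ordering issue the argument is essentially a direct formal-series manipulation.
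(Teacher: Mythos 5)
Your argument is correct. The paper itself disposes of this proposition in one line, by induction as in the Jacobi predecessor paper: one assumes \eqref{E:simplifiedjump} for indices below $k$, substitutes into \eqref{RHPforRk}, and rearranges using \eqref{EcheckS}. You instead work globally in the ring of formal power series in $n^{-1}$ with matrix coefficients, identifying $I-S$ with $(I+\Delta^{\R/\L})^{-1}$ and reading off \eqref{EcheckS} as the left-inverse recursion and \eqref{E:simplifiedjump} as the coefficient expansion of $R^{\O}(I-S)$. The two proofs have the same content --- the induction is exactly the coefficient-by-coefficient unfolding of your inversion --- but your packaging makes the structure transparent and correctly isolates the one genuinely non-trivial point: the recursion \eqref{EcheckS} multiplies $s_j$ on the \emph{left} of $\Delta^{\R/\L}_{l-j}$, whereas the substitution $R^{\R/\L}=R^{\O}(I-S)$ requires $I-S$ to be a \emph{right} inverse of $I+\Delta^{\R/\L}$; you close this gap by invoking the coincidence of one-sided inverses for units in the formal-series ring, which is valid since the leading coefficient is $I$.

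One small slip in your closing paragraph: you write that the left-inverse recursion \eqref{EcheckS} is the one that ``matches the ordering demanded by \eqref{E:simplifiedjump}.'' It is the other way around --- the ordering $R_{k-l}^{\O}\,s_l^{\R/\L}$ in \eqref{E:simplifiedjump} is what the right-inverse identity $(I+\Delta^{\R/\L})(I-S)=I$ delivers directly, while \eqref{EcheckS} encodes the left inverse; the two are reconciled precisely by the two-sided-inverse argument you already gave. This does not affect the validity of the proof, since your second paragraph establishes both identities before either is used.
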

\begin{proof} This can be proven by induction as in \cite[\S 4.1]{jacobi}. \end{proof}
This formulation has two advantages:
\begin{itemize}
 \item The jump term in \eqref{E:simplifiedjump} is written in terms of $R_{k-l}^{\O}$ rather than $R^{\R}_{k-l}$, and the former has a simple and non-recursive expression \eqref{ERpl}.
 \item The definition of the coefficients $s_l^{\R/\L}$ can be greatly simplified to a non-recursive expression too, involving just the $\Delta_k^{\R/\L}$'s.
\end{itemize}

More precisely, we have the following result:

\begin{proposition} \label{Tsimpl} The terms $s^{\R/\L}_l(z)$ defined by \eqref{EcheckS} satisfy 
\begin{equation}
   s^{\L/\R}_l(z) = \Delta^{\L/\R}_l(z) \nonumber
\end{equation}
for odd $m$ and 
\begin{align}
   s^{\L}_l(z) = & \hspace{1.5mm} \Delta^{\L}_l(z) -\frac{4\alpha^2+2l-1}{(4\bar{\phi}_n(z))^l}\frac{(\alpha,l-1)}{2l} I, \nonumber \\
   s^{\R}_l(z) = & \hspace{1.5mm} \Delta^{\R}_l(z) -\frac{\nu_l}{(-\xi_n(z))^l}I, \nonumber 
\end{align}
for even $m$, where $\xi_n$ and $\bar{\phi}_n$ are as defined in \cref{ss:auxiliary}.
\end{proposition}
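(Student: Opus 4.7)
The plan is to argue by strong induction on $l$, mirroring the analogous argument in \cite[\S 4.1]{jacobi}. The base case $l=1$ is immediate: the sum in \eqref{EcheckS} is empty, so $s_1^{\R/\L}=\Delta_1^{\R/\L}$, which is consistent with the claim in both parity branches. For the inductive step I would substitute the inductive hypothesis for $s_j^{\R/\L}$ ($j<l$) into \eqref{EcheckS} and evaluate the convolution $\sum_{j=1}^{l-1} s_j^{\R/\L}(z)\Delta_{l-j}^{\R/\L}(z)$ in closed form.

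The structural observation that makes this evaluation tractable is the similarity factorisation visible in \cref{ss:localjump}: both $\Delta_k^{\R}(z)$ and $\Delta_k^{\L}(z)$ take the form $c_k(z)\,C(z)\,N_k\,C(z)^{-1}$, with $c_k(z)$ a scalar function built from $\xi_n$ or $\bar\phi_n$, $N_k$ a constant $2\times 2$ matrix, and $C(z)$ constructed from $P^{(\infty)}(z)$ together with the diagonal factor $z^{\alpha\sigma_3/2}$ or $(-z)^{\alpha\sigma_3/2}$. Because the conjugation commutes with matrix multiplication, every iterated product $\Delta_{j_1}\cdots\Delta_{j_p}$ collapses to a scalar factor times $C(z)\,(N_{j_1}\cdots N_{j_p})\,C(z)^{-1}$. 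The whole proof therefore reduces to a purely algebraic identity between convolutions of the constant matrices $N_k$ (each of which is $\nu_k$ times a $k$-dependent pattern in the $\R$ case, or $(\alpha,k-1)$ times an analogous pattern involving $\alpha^2+k/2-1/4$ in the $\L$ case) and the claimed scalar-times-identity correction ($\nu_l/(-\xi_n)^l\,I$ on the right, $(4\alpha^2+2l-1)(\alpha,l-1)/[2l(4\bar\phi_n)^l]\,I$ on the left).

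This purely algebraic identity is then proved by exploiting parity. Each constant matrix $N_k$ splits into a trace-free part and a scalar-times-identity part, with the identity contribution vanishing for one parity of $k$ and surviving for the other. When the surviving identity parts cancel through the convolution, the recursion collapses to $s_l^{\R/\L}=\Delta_l^{\R/\L}$; in the complementary case, the accumulated identity piece matches the claimed scalar correction exactly, provided one verifies the underlying convolution identities among the $\nu_k$ (respectively among the $(\alpha,k-1)$ together with $\alpha^2+k/2-1/4$). The main obstacle I foresee is the algebraic bookkeeping: tracking the signs $(-1)^k$, the imaginary factors on the anti-diagonals, the non-commutativity of the $N_k$'s, and the precise scalar convolution identities simultaneously is delicate. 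As a cleaner cross-check I would exploit the reformulation $I-S^{\R/\L}(z)=[I+\Delta^{\R/\L}(z)]^{-1}$: since $I+\Delta^{\R/\L}$ is the asymptotic expansion of the local-parametrix ratio $P_n(z)[P^{(\infty)}(z)]^{-1}$, the Airy and Bessel Wronskian identities yield this inverse in closed form and allow the $s_l^{\R/\L}$ to be read off directly.
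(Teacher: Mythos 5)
Your overall strategy is the same as the paper's: induction on $l$, using the common conjugation structure $\Delta_k^{\R/\L}(z)=c_k(z)\,C(z)N_kC(z)^{-1}$ to collapse the convolution $\sum_{j=1}^{l-1}s_j\Delta_{l-j}$ into a scalar-coefficient problem for the constant matrices $N_k$. (The paper proves this by induction ``completely analogous to \cite[\S B]{jacobi}''; note also that the proposition's ``odd/even $m$'' is really the parity of the index $l$, which you read correctly.) The structural part of your argument is sound, and your observation that $I-S=(I+\Delta)^{-1}$ follows correctly from \eqref{EcheckS} with $\Delta_0=I$.

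The gap is that you defer exactly the nontrivial content of the proposition: the closed-form evaluation of the scalar convolutions. After the conjugation collapses and the trace-free/identity split is made, what remains to be shown is, for the right disk, that sums of the type $\sum_{j=1}^{l-1}(-1)^j\bigl[36j(l-j)-1\bigr]\nu_j\nu_{l-j}$ vanish for odd $l$ and produce precisely $-2\nu_l$ times the claimed normalization for even $l$ (and the analogous identities for the Bessel coefficients $(\alpha,j-1)$ together with $\alpha^2+j/2-1/4$ on the left). These are genuine hypergeometric summation identities, not bookkeeping; the odd case follows from the $j\leftrightarrow l-j$ antisymmetry, but the even case does not, and ``provided one verifies the underlying convolution identities among the $\nu_k$'' is where the proof actually lives. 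The paper certifies these identities by Gosper's algorithm for indefinite hypergeometric summation, supplying the explicit data $\lambda_{j,l-j}=(-1)^j[36(l-j)j-1]\nu_{l-j}\nu_j/2$, $p_j=1-36j(l-j)$, $q_j=-(6j-5)(6j-7)(l-j+1)$, $f_j=1/l$ for the right disk (and notes a subtlety about $\gcd(q_n,r_{n+j})\neq 1$ requiring a change of variables in Gosper's framework). Your proposed cross-check via the Wronskian form of $P_n^{-1}$ is a reasonable alternative in principle, but as stated it only relocates the same work: one must still match the asymptotic series of $P^{(\infty)}P_n^{-1}$ against $\Delta_l$ minus a scalar multiple of $I$, which again requires the same product identities for the Airy and Bessel asymptotic coefficients.
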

\begin{proof}
This can be proven again by mathematical induction, completely analogous to \cite[\S B]{jacobi} for the left case. We should note that gcd$(q_n,r_{n+j})$ is not necessarily one, which is needed in \cite[under (10d)]{Gosper}, but that the suggested change in variables can eliminate the common factor. For the right case, the proof is also analogous, but with $\lambda_{j,l-j} = a_j = (-1)^j[36(l-j)j -1]\nu_{l-j}\nu_j/2, p_j = 1-36j(l-j), q_j = -(6j-5)(6j-7) (l-j+1)$ and $f_j=1/l$.
\end{proof}

In the next equations, the coefficients $W_{k,i}^{\R/\L}$ are the Laurent coefficients of $s_k^{\R/\L}$ for monomial $Q$,
\begin{equation}
 \label{Wkm}
 s_k^{\R/\L}(z) \sim \sum_{i=-\lceil 3k/2 \rceil}^\infty W_{k,i}^{\R/\L} (z -1/2 \mp 1/2)^i.
\end{equation}
They are used to compute $U_{k,p}^{\R/\L}$ directly, based on \eqref{E:simplifiedjump}. This has the advantage of needing less memory, as the $Q_{k,n}^{\R/\L}$ are not needed any more. Still for monomial $Q(x)$, that leads us to 
\begin{align} 
	U_{k,p}^{\R/\L} &= \hspace{1.5mm} W_{k,-p}^{\R/\L} + 
	\sum_{j=1}^{k-1}\sum_{l=\max(p-\lceil  (2 \pm 1)j/2 \rceil,1) }^{\lceil (2 \pm 1) (k-j)/2 \rceil} U_{k-j,l}^{\R/\L} W_{j,l-p}^{\R/\L}   \label{EUW} \\
	 & \hspace{1.5mm} +\sum_{j=1}^{k-1}\sum_{n=0}^{\lceil (2 \pm 1) j/2 \rceil -p}  \left(\sum_{i=1}^{\lceil (2 \mp 1) (k-j)/2 \rceil} (\pm 1)^{i+n} { -i \choose n} U_{k-j,i}^{\L/\R} \right) W_{j,-n-p}^{\R/\L}. \nonumber 
\end{align}

It might be necessary to approximate the orthonormal polynomials near $z=0$ and $1$, for which it is inaccurate and computationally expensive to use \cref{E:simplifiedjump} and certainly the recursive application of \cref{RHPforRk}. So optionally, one can still compute the series expansion of $R^{\R/\L}_k(z)$ afterwards, using 
\begin{align} 
	Q_{k,n}^{\R/\L} = & \hspace{1.5mm} \left(\sum_{i=1}^{\lceil (2 \mp 1)k/2 \rceil} {-i \choose n} (\pm 1)^{-i-n} U_{k,i}^{\L/\R} \right)-W_{k,n}^{\R/\L} -\sum_{j=1}^{k-1}\sum_{i=n+1}^{n+\lceil (2 \pm 1)(k-j)/2 \rceil} U_{k-j,i-n}^{\R/\L} W_{j,i}^{\R/\L}  \\
	& -\sum_{j=1}^{k-1}\sum_{i=-\lceil (2\pm1) j/2 \rceil}^{n} \sum_{l=1}^{\lceil (2 \mp 1)(k-j)/2 \rceil} { -l \choose n-i} (\pm 1)^{i-n+l} U_{k-j,l}^{\L/\R} W_{j,i}^{\R/\L}. 
\end{align}

For general polynomial $Q(x)$, we have the more general expansion
\begin{equation}\label{Wkmnonmon} 
  s_k^{\R/\L}(z) \sim \sum_{l=0}^\infty \sum_{i=-\lceil 3k/2 \rceil}^\infty W_{k,i,l}^{\R/\L} (z -1/2 \mp 1/2)^i n^{-l/m},
\end{equation}
which leads to
\begin{align} 
	U_{k,q}^{\R}  = & \left[\sum_{j=\lfloor 2q/3\rfloor}^{\lfloor (k-1)/m \rfloor} W_{j+1,-q,k-1-jm}^{\R}\right] + \sum_{l=0}^{k-1-m} \sum_{j=0}^{\lfloor (k-1-l)/m-1 \rfloor}  \sum_{i=\max(-\lceil 3j/2 \rceil,1-q)}^{\lceil 3l/2 \rceil-q} U_{l,q+i}^{\R} W_{j+1,i,k-1-l-jm-m}^{\R} \\
	& + \sum_{l=0}^{k-1-m} \sum_{p=1}^{\lceil l/2 \rceil } U_{l,p}^{\L} \sum_{j=0}^{\lfloor (k-1-l)/m-1 \rfloor}  \sum_{i=-\lceil 3j/2 \rceil}^{-q} W_{j+1,i,k-1-l-jm-m}^{\R}  {-p \choose -q-i}, \\
	U_{k,q}^{\L} = & \left[\sum_{j=2q}^{\lfloor (k-1)/m \rfloor}  W_{j+1,-q,k-1-jm}^{\L} \right] + \sum_{l=0}^{k-1-m} \sum_{j=0}^{\lfloor (k-1-l)/m-1 \rfloor} \sum_{i=\max(-\lceil j/2 \rceil,1-q)}^{\lceil l/2 \rceil -q } U_{l,q+i}^{\L} W_{j+1,i,k-1-l-jm-m}^{\L} \\
	& \quad + \sum_{l=0}^{k-1-m} \sum_{p=1}^{\lceil 3l/2 \rceil} U_{l,p}^{\R} \sum_{j=0}^{\lfloor (k-1-l)/m-1 \rfloor} \sum_{i=-\lceil j/2 \rceil}^{-q} W_{j+1,i,k-1-l-jm-m}^{\L} (-1)^{q+i+p} {-p \choose -q-i}.
\end{align}

\section{Explicit series expansions for $s_k^{\L/\R}(z)$ and $\Delta_k^{\L/\R}(z)$} \label{SexplL}

In this section we derive fully explicit expressions for the coefficients $W_{k,i}^{\R/\L}$, defined by \eqref{Wkm} or \eqref{Wkmnonmon} for a monomial, general polynomial and general function $Q$ respectively. 
These expressions are amenable to implementation without further symbolic manipulations. The process and terminology of symbols mimicks that used in \cite{jacobi} for Jacobi polynomials. In this section we aim to be coincise yet complete (thus needing Russian characters): we expand $s_k^{\L/\R}(z)$ and $\Delta_k^{\L/\R}(z)$ in power series where the coefficients are computed using convolutions.

\subsection{Left disk with monomial $Q(x)$} \label{SexplLeft}

First we consider $s_k^{\L}(z)$, where we know that $W_{k,i}^{\L} \equiv 0$ $\forall i < -\lceil k/2 \rceil$. We have
\begin{equation} \label{EDeltak_Tk}
 \Delta_k^{\L}(z) = \frac{(\alpha,k-1)}{4^k \bar{\phi}_n(z)^k} 2^{-\alpha\sigma_3} G_k(z) 2^{\alpha\sigma_3},
\end{equation}
where $G_k$ is defined for odd and even $k$ as
\begin{align}
	G_k^{\text{odd}}(z) &= \frac{\alpha^2 + \frac{k}{2}-\frac{1}{4}}{4k z^{1/2}(z-1)^{1/2}} \begin{pmatrix} -4z+2 & 2i \\ 2i & 4z-2 \end{pmatrix} + \frac{\left(k-\frac{1}{2}\right)i}{4 z^{1/2}(z-1)^{1/2}} \begin{pmatrix} -2\cos(y_{\alpha}) & -2i\cos(y_{\alpha+1})\\ -2i\cos(y_{\alpha-1}) & 2\cos(y_{\alpha}) \end{pmatrix}, \nonumber \\
	G_k^{\text{even}}(z) &= \frac{\alpha^2 + \frac{k}{2}-\frac{1}{4} }{4 k z^{1/2}(z-1)^{1/2}} \begin{pmatrix} 4\sqrt{z}\sqrt{z-1} & 0\\ 0 & 4z\sqrt{z}\sqrt{z-1} \end{pmatrix} + \frac{\left(k-\frac{1}{2}\right)i}{4 z^{1/2}(z-1)^{1/2}} \begin{pmatrix} -2\sin(y_{\alpha}) & -2i\sin(y_{\alpha+1})\\ -2i\sin(y_{\alpha-1}) & 2\sin(y_{\alpha}) \end{pmatrix}. \nonumber 
\end{align}
One should remark that $(-\varphi(z))^\alpha \neq \varphi(z)^\alpha(-1)^\alpha$ with standard branch cuts when deriving this formula. Also, we have used $\varphi(z) = \exp(i\arccos(2z-1))$ \cite[\S 5.0]{Vanlessen}.

The functions $y_\gamma$ above are dependent on $z$ by
\begin{align}
	y_\gamma = \gamma \left( \arccos(2z-1) - \pi \right), & \label{ygamma} \\
	y_{\gamma} \sim -2\gamma\sqrt{z} \sum_{j=0}^{\infty}\rho_{1,j} z^j, & \qquad \rho_{1,j}  = \frac{(1/2)_j}{j!(1+2j)}, \\
	y_\gamma^k \sim (-2\gamma\sqrt{z})^k \sum_{j=0}^\infty \rho_{k,j} z^n, & \qquad \rho_{k,j} = \sum_{l=0}^j \rho_{k-1,l} \rho_{1,j-l}. \nonumber 
\end{align}
Note that with the standard branch cuts for the powers, $y_{\gamma}$ is real on the interval $[0,1]$.

We intend to expand all terms appearing in the definitions of $G_k$, starting with
\begin{align} 
	\frac{\cos(y_{\gamma})}{2 z^{1/2}(z-1)^{1/2}} &\sim (-16z)^{-1/2} \sum_{m=0}^{\infty}\left[1+\sum_{j=1}^m {-1/2 \choose j} (-1)^{j} \left\{\delta_{0,m-j} + \sum_{l=1}^{m-j} \frac{(-1)^l}{(2l)!} (-2\gamma)^{2l} \rho_{2l,m-j-l}\right\}\right]z^m,\\ 
	\frac{\sin(y_{\gamma})}{2 z^{1/2}(z-1)^{1/2}} &\sim \frac{\gamma i}{2} \sum_{m=0}^{\infty}\left[\sum_{j=0}^m {-1/2 \choose j} (-1)^{-j} \left\{\sum_{l=0}^{m-j} \frac{(-1)^l}{(2l+1)!} (-2\gamma)^{2l+1} \rho_{2l+1,m-j-l} \right\} \right]z^m,
\end{align}
where we have used the Kronecker delta $\delta_{i,j}$. 
Also, 
\[
\frac{4z-2}{4z^{1/2}(z-1)^{1/2}} \sim \frac{i}{2\sqrt{z}} + \frac{i}{2\sqrt{z}} \sum_{n=1}^{\infty} \left(2{-1/2 \choose n-1} +{-1/2 \choose n}\right)(-1)^{n} z^n. 
\]

We still need to expand the power $\bar{\phi}_n(z)^{-k}$ in \cref{EDeltak_Tk} as $z \to 0$. To that end, we can combine \eqref{Exin}, \eqref{Ephibar} and \eqref{EHn}. It is quite standard, but increasingly tedious, for series expansions to involve convolutions whose coefficients can be found recursively. That is the origin of the coefficients $f_j$ and $g_{k,m}$ below, and we will use this pattern several times more in the remainder of this section:
\begin{align} 
	\bar{\phi}_n(z) \sim  \theta(z) i \sqrt{z} \sum_{j=0}^\infty f_j z^j, & \qquad f_j = -\frac{(1/2)_j }{j!(1+2j)} - \frac{1}{2 m A_m}\sum_{k=0}^{\min(m-1,j)} (-1)^{j-k} {1/2 \choose j-k} A_{m-k-1}, \label{Efphibar} \\ 
	(\bar{\phi}_n(z))^{-1} \sim  (-z)^{-1/2} \sum_{m=0}^{\infty} g_{1,m} z^{m}, & \qquad g_{1,0} = \frac{1}{f_0}, \qquad \qquad \qquad \quad g_{1,m} = \frac{-1}{f_0} \sum_{j=0}^{m-1} g_{1,j} f_{m-j}, \\
	(\bar{\phi}_n(z))^{-k} \sim  (-z)^{-k/2} \sum_{m=0}^{\infty} g_{k,m} z^{m}, & \qquad g_{k,m} = \sum_{l=0}^n g_{k-1,l} g_{1,m-l}.
\end{align}

The coefficients $W_{k,i}^{\L}$ in expansion \eqref{Wkm} for the functions $s_k^{\L}(z)$ are given by 
\begin{align}
         W_{k,i}^{\L} & = \frac{(\alpha,k-1)}{-(-1)^{\lceil k/2 \rceil +1}4^k}  2^{-\alpha\sigma_3}\sum_{j=0}^{i+(k+1)/2} g_{k,j} G_{k,i+(k+1)/2-j}^{\operatorname{odd} }  2^{\alpha\sigma_3}, \nonumber \\ 
         W_{k,i}^{\L} & = \frac{(\alpha,k-1)}{-(-1)^{\lceil k/2 \rceil +1}4^k}  2^{-\alpha\sigma_3}\left( \sum_{j=0}^{i+k/2} g_{k,j} G_{k,i+k/2-j}^{\operatorname{even}} \right) 2^{\alpha\sigma_3} - \frac{(\alpha,k-1)(4\alpha^2+2k-1)g_{k,i+k/2}}{2 k 4^k}I, \nonumber 
\end{align}
respectively for odd and even $k$. The $V_{k,i}^{\L}$ can be obtained by leaving out the term with $I$.

\subsection{Right disk with monomial $Q(x)$} \label{SexplR}

Unlike in the Jacobi case, the expressions for the left and right disks are not symmetric, since they correspond to qualitatively different behaviour of the polynomials near a hard edge and near a soft edge. With $w = z-1$ and $\tau_\gamma = \gamma \left(\arccos(2w+1) \right)$, we have
\begin{align}
	\Delta_k^{\R}(z) = & \frac{2^{-\alpha\sigma_3}}{2\left(- \xi_n(z) \right)^k} \Omega_k(z) 2^{\alpha\sigma_3}, \nonumber \\
	\Omega_k^{\text{odd}}(z) = & \frac{\nu_k}{4\sqrt{w}\sqrt{w+1}}\begin{pmatrix} -4w-2 & 2i \\ 2i & 4w+2 \end{pmatrix} + \frac{-6k\nu_k}{4\sqrt{w}\sqrt{w+1}}  \begin{pmatrix} -2\cos(\tau_{\alpha}) & 2i\cos(\tau_{\alpha+1})\\ 2i\cos(\tau_{\alpha-1}) & 2\cos(\tau_{\alpha}) \end{pmatrix},\nonumber \\
	\Omega_k^{\text{even}}(z) = & \nu_k I + \frac{-6k\nu_k}{4\sqrt{w}\sqrt{w+1}}  \begin{pmatrix} -2i\sin(\tau_{\alpha}) & -2\sin(\tau_{\alpha+1})\\ -2\sin(\tau_{\alpha-1}) & 2i\sin(\tau_{\alpha}) \end{pmatrix}. \nonumber
\end{align}
For the expansion of $\Omega_k(z)$, we observe that
\begin{align}
	\arccos(1+2w) \sim 2\sqrt{-w} \sum_{j=0}^\infty \rho_{1,j}w^j, & \qquad \rho_{1,j} = \frac{(1/2)_j(-1)^j}{(1+2j)j!}, \nonumber \\
	\left[\gamma\arccos(1+2w)\right]^m \sim (2\gamma i)^m w^{m/2} \sum_{j=0}^\infty \rho_{m,j} w^j, & \qquad \rho_{m,j} = \sum_{l=0}^j \rho_{m-1,l}\rho_{1,j-l}, \nonumber \\
	\frac{\cos\left(\tau_\gamma\right)}{\sqrt{w^2+w}} \sim w^{-1/2} \sum_{m=0}^\infty \text{\foreignlanguage{russian}{б}}_{\gamma,m} w^m, & \qquad \text{\foreignlanguage{russian}{б}}_{\gamma,m} = \sum_{j=0}^m {-1/2 \choose j} \left[\sum_{l=0}^{m-j} \frac{(-1)^l}{(2l)!}(2\gamma i)^{2l} \rho_{2l,m-j-l}\right],  \nonumber \\
	\frac{\sin\left(\tau_\gamma\right)}{\sqrt{w^2+w}} \sim \sum_{m=0}^\infty \text{\foreignlanguage{russian}{ч}}_{\gamma,m} w^m, & \qquad \text{\foreignlanguage{russian}{ч}}_{\gamma,m} = \sum_{j=0}^m {-1/2 \choose j} \left[\sum_{l=0}^{m-j} \frac{(-1)^l}{(2l+1)!}(2\gamma i)^{2l+1} \rho_{2l+1,m-j-l}\right]. \nonumber 
\end{align}

With these expressions in hand, we focus again on the phase function. We construct the power series of $\xi_n(1+w)^{-k}$ as follows:
\begin{align}
	[\sqrt{1+w}-1]^m &\sim w^m \sum_{l=0}^\infty u_{m,l} w^l, \quad u_{1,l} = {1/2 \choose l}, \quad u_{m,l} = \sum_{j=0}^l u_{m-1,j} u_{1,l-j}, \nonumber \\ 
	\sqrt{2-2\sqrt{1+w}} &\sim \sqrt{-2w} \sum_{k=0}^\infty r_k w^k, \quad v_{1,j} = {1/2 \choose j+2}, \quad v_{m,l} = \sum_{j=0}^l v_{m-1,j} v_{1,l-j}, \quad r_k = \sum_{l=0}^k {1/2 \choose k-l} v_{k-l,l} 2^{k-l}, \nonumber \\ 
	q_m &= \sum_{l=0}^m \frac{(1/2)_{m-l} u_{m-l,l} }{(-2)^{m-l}((m-l)! (1+2m-2l)}, \nonumber \\ 
	\xi_n(1+w) &\sim \sqrt{w} \sum_{j=1}^\infty f_j w^j, \nonumber \\
	f_j & = 2\sum_{l=0}^j q_l r_{j-l} - \frac{1}{m A_m}\sum_{k=0}^{\min(m-1,j)} (-1)^{m-k-1} {1/2 \choose j-k} \frac{\Gamma(-1/2-k)}{\Gamma(1/2-m)\Gamma(m-k)}  \label{Efxin} \\
	\left(-\xi_n(1+w)\right)^{-1} &\sim w^{-3/2} \sum_{m=0}^{\infty} g_{1,m} z^{m}, \qquad g_{1,0} = \frac{-1}{f_1}, \qquad \qquad g_{1,m} = \frac{-1}{f_1} \sum_{j=1}^{m} f_{j+1} g_{1,m-j}, \nonumber \\
	\left(-\xi_n(1+w)\right)^{-k} &\sim w^{-3k/2} \sum_{m=0}^{\infty} g_{k,m} w^m, \qquad g_{k,m} = \sum_{l=0}^n g_{k-1,l} g_{1,m-l}. \nonumber
\end{align}
The phase function $\xi_n(z)$ should be $\mathcal{O}\left( (z-1)^{3/2} \right)$ \cite[Rem. 3.22]{Vanlessen} and \cref{Efxin} indeed indicates that $f_0 = -2+2q_0r_0$ is zero, so we have started the indices in the expansion of $\xi_n(z)$ from $j=1$. Keeping in mind the Kronecker delta $\delta_{i,j}$ and that ${-1/2 \choose -1} = 0$, we arrive at the final result
\begin{align}
	\Omega_{k,j}^{\text{odd}} &= \frac{\nu_k}{2} \begin{pmatrix} - {-1/2 \choose j} -2 {-1/2 \choose j-1} & i {-1/2 \choose j} \\ i {-1/2 \choose j} & {-1/2 \choose j} +2 {-1/2 \choose j-1} \end{pmatrix} -3k\nu_k \begin{pmatrix} \text{\foreignlanguage{russian}{б}}_{\alpha,j} & i\text{\foreignlanguage{russian}{б}}_{\alpha+1,j} \\ i\text{\foreignlanguage{russian}{б}}_{\alpha-1,j} & \text{\foreignlanguage{russian}{б}}_{\alpha,j} \end{pmatrix}, \nonumber \\
	\Omega_{k,j}^{\text{even}} &= \nu_k I \delta_{0,j} -3k\nu_k \begin{pmatrix} -i\text{\foreignlanguage{russian}{ч}}_{\alpha,j} & -\text{\foreignlanguage{russian}{ч}}_{\alpha+1,j} \\ -\text{\foreignlanguage{russian}{ч}}_{\alpha-1,j} & i\text{\foreignlanguage{russian}{ч}}_{\alpha,j} \end{pmatrix}, \nonumber \\
	W_{k,i}^{\R} &= \frac{2^{-\alpha\sigma_3}}{2} \left(\sum_{j=0}^{i+\lceil 3k/2 \rceil}  \Omega_{k,j} g_{k,i+\lceil 3k/2 \rceil-j} \right) 2^{\alpha\sigma_3}, \nonumber 
\end{align}
to which we add $-\nu_k g_{k,i+3k/2}I$ when $k$ is even.

\subsection{Left disk with general polynomial $Q(x)$} \label{SprecLnonmon}

We can re-use the expansion of $G_k(z)$. However, in this case $H_n(z)$ has an expansion in $n$, given by \eqref{EHn_nonmon}. Hence, we have to determine the expansion of $\bar{\phi}_n(z)$ \cref{Ephibar} near $z=0$ anew, but afterwards one continues as in \S \ref{SexplLeft} to get the $W_{k,i,l}^{\L}$ from \cref{Wkmnonmon}:
\begin{align}
	\bar{\phi}_n(z) &\sim -\theta(z) i \sqrt{z} \sum_{l=0}^\infty \sum_{j=0}^\infty f_j^l z^j n^{-l/m}, \nonumber \\ 
	f_j^0 &= \frac{(1/2)_j }{j!(1+2j)} + \frac{1}{2mA_m}\sum_{k=0}^{\min(m-1,j)} (-1)^{j-k} {1/2 \choose j-k} A_{m-k-1}, \nonumber \\
	f_j^l &= \frac{1}{2} \sum_{i=0}^{\min(j,m-1)} (-1)^{j-i} {1/2 \choose j-i} \sum_{n=\max(i+1,m-l)}^m  q_n A_{n-i-1} \beta^{n,n+l-m}, \qquad l > 0, \nonumber \\
	(\bar{\phi}_n(z))^{-1} &\sim (-z)^{-1/2} \sum_{l=0}^\infty \sum_{j=0}^{\infty} g_{1,j}^l z^j n^{-l/m}, \qquad g_{1,0}^0 = \frac{1}{f_0}, \qquad g_{1,n}^0 = \frac{-1}{f_0} \sum_{i=0}^{n-1} g_{1,i}^0 f_{n-i}^0,  \nonumber \\
	g_{1,i}^l &= \sum_{y=0}^i \sum_{p=0}^{i-y} g_{1,i-y-p}^0 \sum_{q=0}^{l-1}  g_{1,y}^q f_p^{l-q}, \qquad g_{k,j}^0 = \sum_{l=0}^j g_{k-1,l}^0 g_{1,j-l}, \nonumber \\
	(\bar{\phi}_n(z))^{-k} &\sim (-z)^{-k/2} \sum_{l=0}^\infty \sum_{j=0}^{\infty} g_{k,j}^l z^j n^{-l/m}, \qquad g_{k,n}^l = \sum_{j=0}^l \sum_{i=0}^n g_{k-1,i}^{j} g_{1,n-i}^{l-j}. \nonumber 
\end{align}
This gives a pole of order $\mathcal{O}(z^{-\lceil k/2 \rceil})$ in $\Delta^{\L}_k(z)$, as it should \cite[Rem 3.29]{Vanlessen}.

\subsection{Right disk with general polynomial $Q(x)$} \label{SprecRnonmon}

We can also reuse the expansion of $\Omega_k(z)$ and the expansion of $\xi_n(z)$ up to the definition of $f_j$ (so the expansion of $\arccos(\sqrt{z})$) from \S \ref{SexplR}. Expanding \eqref{Exin}, we get
\begin{align}
	\xi_n(1+w) &\sim \sqrt{w} \sum_{l=0}^\infty \sum_{j=1}^\infty f_j^l w^j  n^{-l/m}, \\ 
	f_j^0 &= \left[2\sum_{l=0}^{\min(j,m-1)} q_l r_{j-l}\right] -\frac{1}{m A_m}\left[\sum_{i=0}^{\min(j,m-1)} {1/2 \choose j-i} (-1)^{m-i-1} \frac{\Gamma(-1/2-i)}{\Gamma(-1/2-m)\Gamma(m-i)}  \right], \nonumber \\
	f_j^l &= \frac{-1}{2} \sum_{k=0}^{m-1} \left[ \sum_{z=\max(k+1,m-l)}^m  q_z A_{z-k-1} \beta^{z,z+l-m} \right] \left\{\sum_{i=0}^{\min(k,j)} {1/2 \choose j-i}  {k \choose i}\right\}. \label{EfxinNonmon}
\end{align}
It follows by the construction in \cref{Smrs} that $f_0^l =0$. Thus, $\left(-\xi_n(1+w) \right)^{-k}$ again has a pole of order $\mathcal{O}(w^{- 3k/2 })$:
\begin{align}
	(-\xi_n(1+w))^{-1} \sim w^{-3/2} \sum_{l=0}^\infty \sum_{j=0}^\infty g_{1,j}^l w^j  n^{-l/m}, & \qquad g_{1,0}^0 = \frac{-1}{f_1^{0}}, \qquad g_{1,i}^0 = \frac{-1}{f_1} \sum_{j=1}^i f_{j+1}^{0} g_{1,i-j}^{0}, \\
	g_{1,i}^l = \sum_{y=0}^{i-1} \sum_{p=1}^{i-y} g_{1,i-y-p}^0 \sum_{q=0}^{l-1} g_{1,y}^q f_{p+1}^{l-q}, & \qquad g_{k,j}^0 = \sum_{l=0}^j g_{k-1,l}^0 g_{1,j-l} , \nonumber \\
	\left(-\xi_n(1+w) \right)^{-k} \sim w^{-3k/2} \sum_{l=0}^\infty \sum_{j=0}^{\infty} g_{k,j}^l w^j n^{-l/m}, & \qquad g_{k,n}^l = \sum_{j=0}^l \sum_{i=0}^n g_{k-1,i}^{j} g_{1,n-i}^{l-j}. \nonumber 
\end{align}

\subsection{Left disk with general function $Q(x)$}
For general $Q$, the MRS number $\beta_n$ is dependent on $n$ in a way that is not easy to predict, see for example \cref{EbetaExp}. As a result, so is $V_n(x)$. This means that the series expansions that form the result of this section are also dependent on $n$. Hence, strictly speaking, they are not the true asymptotic expansions. However, for any given $n$ they can still be useful in computations and give a computational time independent of $n$. We proceed by expanding the function $h_n$ in a Taylor series, using contour integrals for the coefficients (see also \cref{SmrsNonpoly}):
\begin{equation} \label{Edn}
	h_n(z) \sim \sum_{l=0}^\infty d_l(n) z^l, \qquad d_l(n) = \frac{1}{2\pi i} \oint_{\Gamma_z} \frac{\sqrt{y}V_n^\prime(y) dy}{\sqrt{y-1}y^{l+1}},
\end{equation}
Continuing the analysis as before, we find an expansion for $\bar{\phi}_n$,

\begin{align}
	\sqrt{y^2-1} h_n(y^2) &\sim i\left[\sum_{p=0}^\infty (-1)^p{ 1/2 \choose p } y^{2p}\right] \left[\sum_{l=0}^\infty d_l(n) y^{2l}\right]  \sim i\sum_{k=0}^\infty \text{\foreignlanguage{russian}{ц}}_k y^{2k} \nonumber \\
	\bar{\phi}_n(z) &= \frac{-1}{2} \int_1^z \frac{\sqrt{y-1}}{\sqrt{y}} h_n(y) dy -\frac{\pi i}{2}  = \frac{-1}{2} \int_0^{\sqrt{z}} \sqrt{y^2-1}  h_n(y^2) dy, \nonumber \\
	\bar{\phi}_n(z) &\sim \frac{-1}{2} \sum_{l=0}^\infty \frac{\sqrt{z}^{l+1}}{(l+1)!} \left. \frac{\partial^l i\sum_{k=0}^\infty \text{\foreignlanguage{russian}{ц}}_k x^{2k} }{\partial x^l} \right|_{x=0} \sim \frac{-i\sqrt{z} h_n(0)}{2} - \frac{z}{4} \left[\frac{2\sqrt{z} h_n(z)}{\sqrt{z-1}} + \sqrt{z-1} h_n'(z)2\sqrt{z} \right] + \cdots, \nonumber \\
	\bar{\phi}_n(z) &\sim -\theta(z) i \sqrt{z} \sum_{j=0}^\infty f_j(n) z^j, \qquad f_j(n) = \frac{-1}{2(2j+1)} \sum_{l=0}^j (-1)^l { 1/2 \choose l } d_{j-l}(n), \nonumber
\end{align}
where we used \cref{Ephibar} and the power series of an integral. We plug this into \cref{Efphibar} and continue with the other equations in \cref{SexplLeft} to get the expansion of $s_k^{\L}(z)$ for general $Q(x)$. The resulting matrices are also treated as for monomial $Q(x)$ in \cref{ScompU,Ssimpl} to obtain the $U$- and $Q$-matrices.

\subsection{Right disk with general function $Q(x)$}
We proceed as in the previous section by expanding $h_n(z)$ using contour integrals:
\begin{equation} \label{Ecn}
	h_n(z) \sim \sum_{l=0}^\infty c_l(n) (z-1)^l, \qquad c_l(n) = \frac{1}{2\pi i} \oint_{\Gamma_z} \frac{\sqrt{y}V_n^\prime(y) dy}{\sqrt{y-1}(y-1)^{l+1}}.
\end{equation}
We can again continue with the phase function:
\begin{align}
	\xi_n(z) = \frac{-1}{2} \int_1^z \frac{\sqrt{y-1}}{\sqrt{y}} h_n(y) dy  & \quad = \frac{-1}{2} \int_0^{\sqrt{w}} \frac{t}{\sqrt{t^2+1}} h_n(t^2+1) 2tdt, \nonumber \\
	\frac{t^2}{\sqrt{t^2+1}} h_n(t^2+1) \sim t^2 \sum_{j=0}^\infty \text{\foreignlanguage{russian}{ч}}_j t^{2j}, & \qquad \text{\foreignlanguage{russian}{ч}}_j = \sum_{l=0}^{j} {-1/2 \choose l} c_{j-l}(n), \nonumber \\
	\xi_n(1+w) \sim \sqrt{w} \sum_{j=1}^\infty f_j(n) w^j, & \qquad f_j(n) = \frac{-1}{2j+1} \sum_{l=0}^{j-1} {-1/2 \choose l} c_{j-1-l}(n). \nonumber
\end{align}

\begin{remark} \label{RgenCont}
Remark that in order to use these expressions, one only needs to (numerically) compute $\beta_n$ and the contour integrals. In contrast, we need to define $m$ times more coefficients in the case $Q(x)$ is a general polynomial to compute \cref{EfxinNonmon}, even though we can also just use those same contour integrals. A brief numerical comparison is given in \cref{SnumerGen}, and we can note that the expressions that we have specifically derived for general polynomial $Q$ are fully explicit.
\end{remark}

\section{Examples and numerical results} \label{Snum}

\subsection{Monomial $Q(x)$}

A case of specific interest in the context of Gaussian quadrature is the standard Laguerre polynomial. We illustrate the accuracy of the asymptotic expansion in the left disk using our simplifications (\cref{E:simplifiedjump} and \cref{Tsimpl}) in the left part of \cref{Fmon}. The values we compare with are computed using a recurrence relation for orthonormal polynomials with exact coefficients with calculations in double precision. We evaluate at a point close to the normalized origin. The errors decrease as $\mathcal{O}(n^{-T})$ with $T$ the number of terms as expected. For small $n$, the expansions may diverge with increasing $T$ and the errors saturate at about $10^{-14}$. 

\begin{figure}[h]
\centering
\subfloat{ \includegraphics[width = 0.49\textwidth]{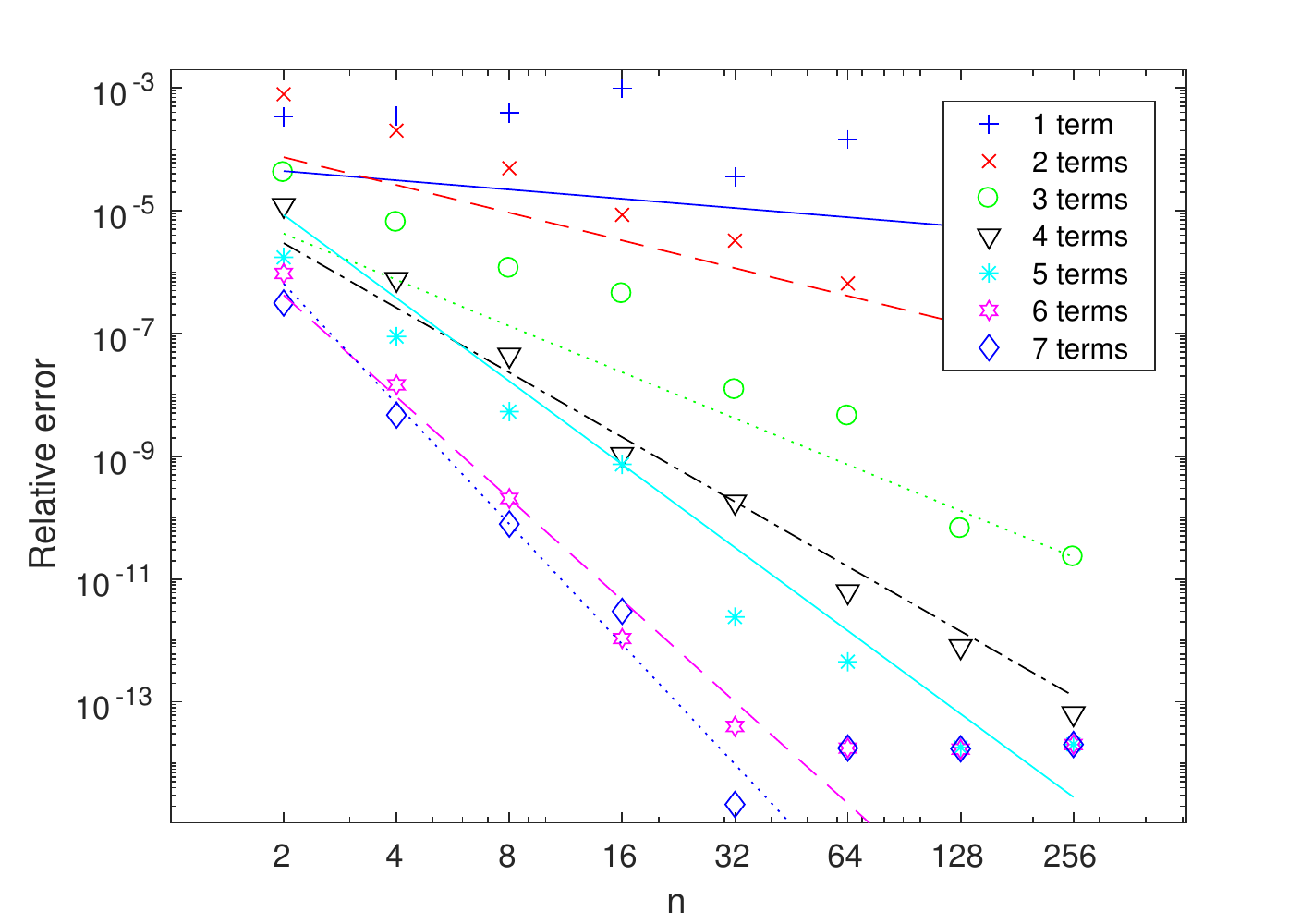} } 
\subfloat{ \includegraphics[width = 0.49\textwidth]{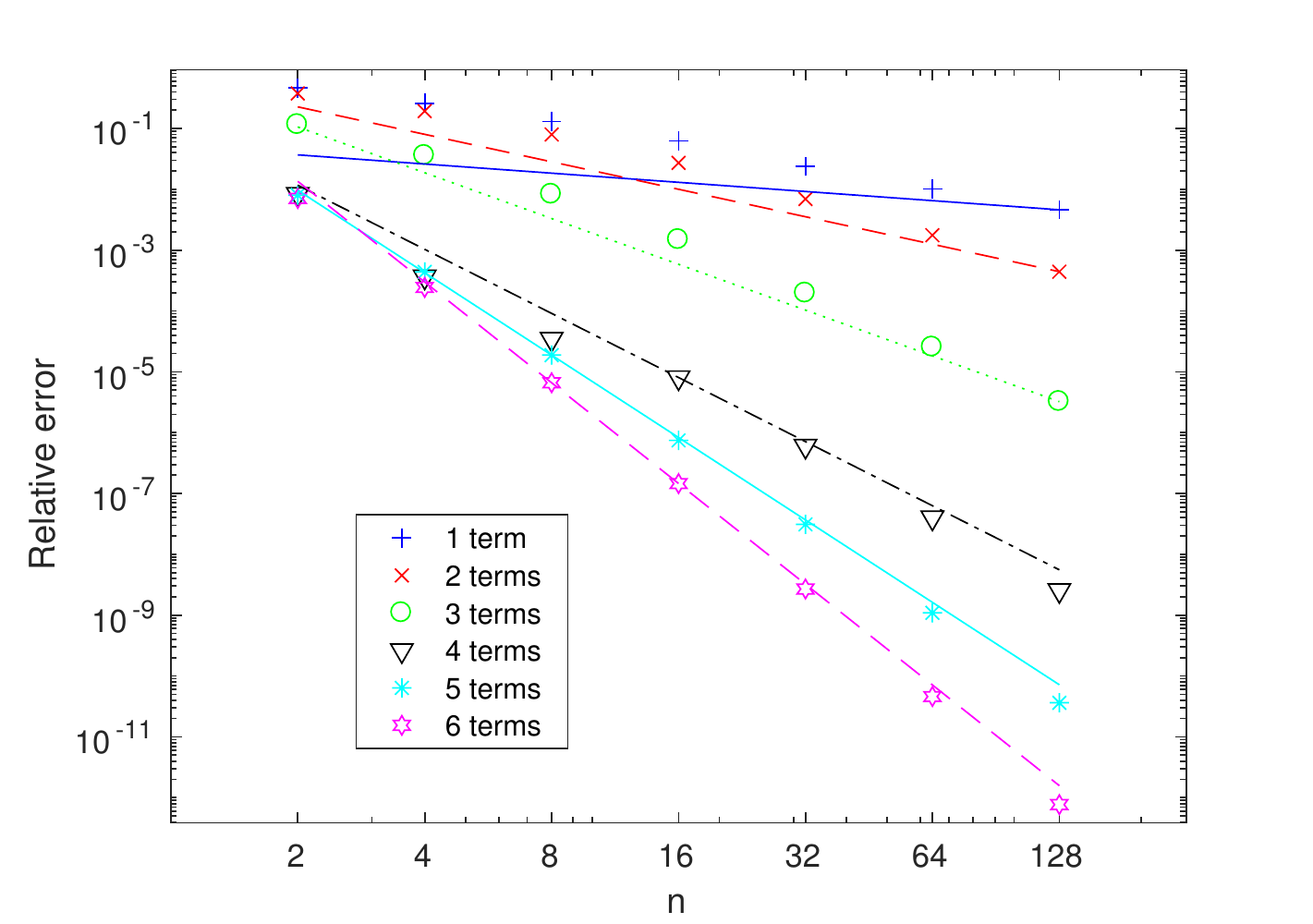} } 
\caption{Relative error of the asymptotic expansion in the left boundary region as a function of the degree, for $w(x) = e^{-x}, x=4n/1000$ (left) and $w(x) = x^{2.8} \exp(-0.7x^3-3/2), x = \beta_n(1 - i)/100$ (right). }
\label{Fmon}
\end{figure}
In the right part of \cref{Fmon}, we show results for another monomial $Q(x)$, where the higher order terms are now calculated with \cref{ERrlseries} where the summation index $n$ ranges from $0$ to $11$. Here, we have used high-precision arithmetic to compute the reference solution using standard methods. In the continuous Lanczos algorithm \cite[Algo 37.1]{trefBau} for the computation of the recurrence coefficients, we have to evaluate integrals such as $a_n = \int_0^\infty x^{\alpha+1} \exp(-Q(x)) p_{n-1}^2(x) dx$. In order to obtain sufficiently accurate `exact' results using the recurrence relation, we had to evaluate the recurrence coefficients with 26 digits of accuracy, a computation that we performed in {\sc Julia}. All computations with the asymptotic expansions were performed in standard floating point double precision. Having said that, the errors again decrease like $\mathcal{O}(n^{-T})$ as we expect, hence we conclude that the higher order terms are computed correctly. The asymptotic expansions 
of the coefficients $\gamma_n$, $\alpha_n$ and $\beta_n$, and of the polynomials in the other regions and for other values of $x$ exhibit similar behaviour.

\subsection{Connection with Hermite polynomials} \label{Sherm}

\cite[18.7.17]{DLMF} states that $H_{2n}(x) = (-1)^n 2^{2n}n!L_n^{(-1/2)}(x^2)$, but as these easily overflow numerically, we construct normalized Hermite polynomials by
\begin{equation}
	H_0^{\text{norm}}(x) = \pi^{-1/4}, \quad H_{-1}^{\text{norm}}(x) = 0, \quad H_j^{\text{norm}}(x) = \frac{2x H_{j-1}^{\text{norm}}(x)}{\sqrt{2j}} -\frac{(j-1)H_{j-2}^{\text{norm}}(x)}{\sqrt{j(j-1)} }. \nonumber
\end{equation}	
As $(2n)!\sqrt{\pi} = n! 4^n \Gamma(n+1/2)$, we have that $H_{2n}^{\text{norm}}(x) = L_n^{(-1/2),\text{norm}}(x^2)$, the normalized associated Laguerre polynomial with positive leading coefficient. In the left part of \cref{Fherm}, we see that the asymptotic expansion in the right disk \cref{Epiboun} (for the summation index $n$ from $0$ to $10$) converges as expected to $H_{2n}^{\text{norm}}(x)$ as a function of $n$, evaluated at $x^2 = 0.97(4n)$ using \cref{ERrlseries}, $Q(x) = x$ and $\alpha = -1/2$.

\begin{figure}[h]
\centering
\subfloat{ \includegraphics[width = 0.49\textwidth]{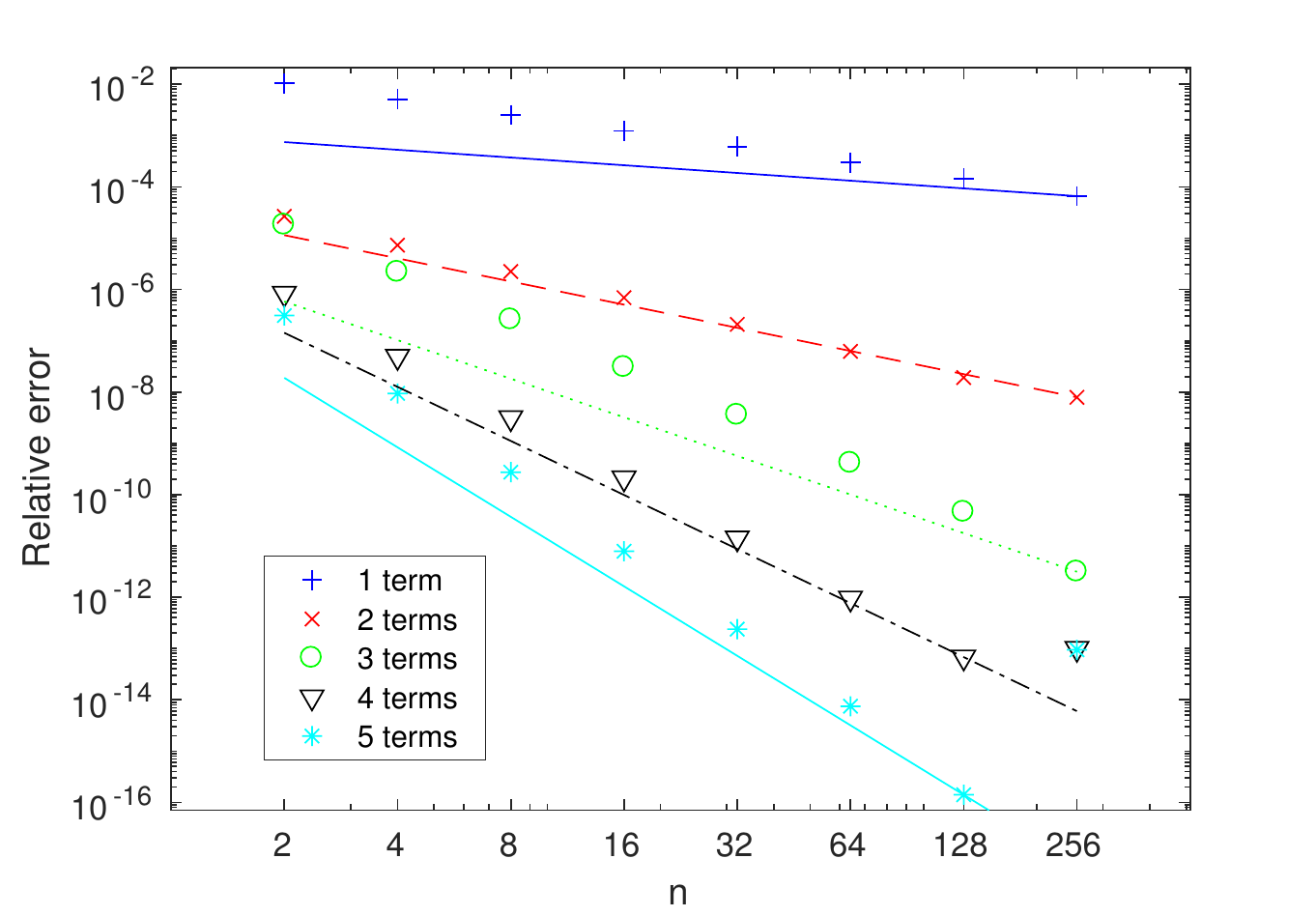} }
\subfloat{ \includegraphics[width = 0.49\textwidth]{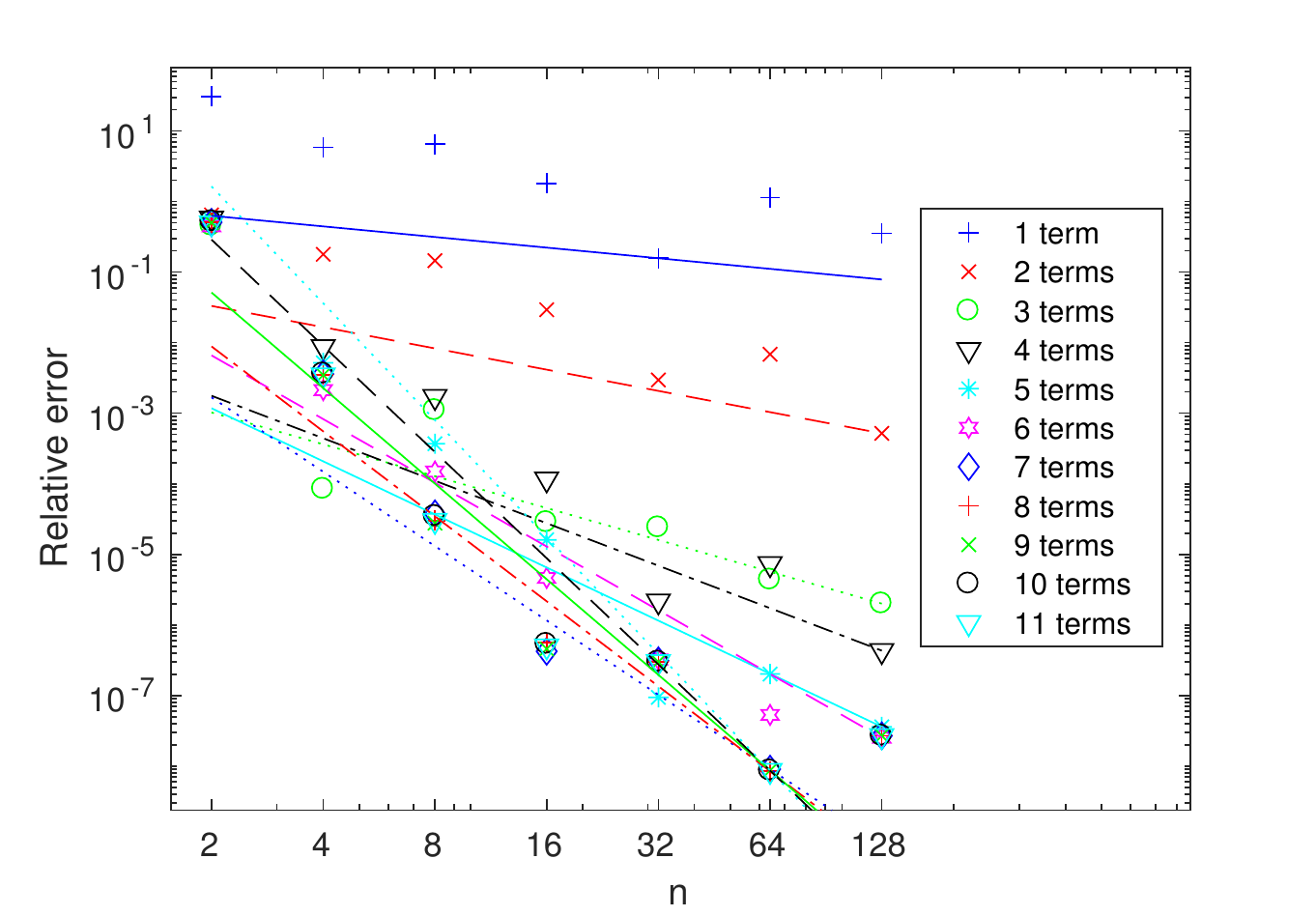} } 
\caption{Relative error on the normalized Hermite polynomials as a function of the degree of the associated Laguerre polynomial $n$, for $w(x) = \exp(-x^2), H_{2n}(x), x=\sqrt{3.88n}$ (left) and $w(x) = \exp(-x^4+3x^2), H_{2n+1}(x), x = \sqrt{0.31\beta_n}$ (right). All calculations (also the recurrence coefficients) were performed in double precision.}
\label{Fherm}
\end{figure}
For odd degrees, a similar reasoning gives $H_{2n+1}^{\text{norm}}(x) = x L_n^{(1/2),\text{norm}}(x^2)$. Now, we explore the connection of a generalized weight $\exp(-x^4+3x^2)$ on $(-\infty, \infty)$ with a weight $\exp(-x^2+3x)$ on $[0, \infty)$, using the expansion in the lens \cref{EpiInt}. Although the right panel of \cref{Fherm} shows higher errors, we do get the $\mathcal{O}(n^{-T/m})$ convergence we expect, with $T$ the number of terms and $m=2$. It also illustrates that taking more terms is not always advantageous, as the asymptotic expansions diverge when increasing $T$ for a fixed $n$. This effect is more pronounced when $n$ is low. 

In both cases, $\alpha^2 = 1/4$ and the expansion in the Bessel region exhibits trigonometric behaviour as in the lens. Unlike in the Jacobi case \cite[\S 2.6]{jacobi}, they are not exactly equal for the same number of terms, but they agree more and more if $n$ and/or $T$ increase(s). However, the computation of higher order terms can be improved in this case as mentioned in \cref{Rspec}.

One could also go through \cite{DKMVZ} to obtain asymptotics of Hermite-type polynomials, and there are indeed many analogies between both approaches, as \cite{Vanlessen} was inspired by it. One advantage of exploiting the connection with Laguerre-type polynomials is that the $U_{k,m}^{\L}$ matrices are zero so their computations can be omitted, while the other approach computes $U$-matrices near both soft edges when straightforwardly implemented in an analogous way.

\subsection{General function $Q(x)$}

In this section, we provide numerical results for our claim that the expansions can also be used for general functions $Q(x)$ for the case $Q(x) = \exp(x)$. We have been able to verify that $\int_0^\beta \exp(x)\sqrt{x/(\beta-x)} dx/2/\pi$ agrees numerically with $\beta\exp(\beta/2) [I_0(\beta/2)+I_1(\beta/2)] /4$, see \cref{EexponQbeta}, as long as these do not overflow. 
Additionally, the corresponding expansion of $\beta_n$ \cref{EbetaExp} converges with the expected rate, as can be seen in \cref{Fexp}. 
We also show that we can approximate this special orthonormal polynomial in the bulk of its spectrum using \cref{EpiInt} in the right side of \cref{Fexp}.
The reference results were again computed using recurrence coefficients with 26 digits. The errors agree with the expected orders which are only negative integer powers of $n$. This is because other types of dependencies on $n$ arising from $\beta_n$ (e.g. the $\mathcal{O}(n^{-1/m})$ for general polynomial $Q(x)$) were eliminated by for each $n$ numerically computing $\beta_n$ and the contour integrals \cref{Edn,Ecn}. 

\begin{figure}[h]
\centering
\subfloat{ \includegraphics[width = 0.49\textwidth]{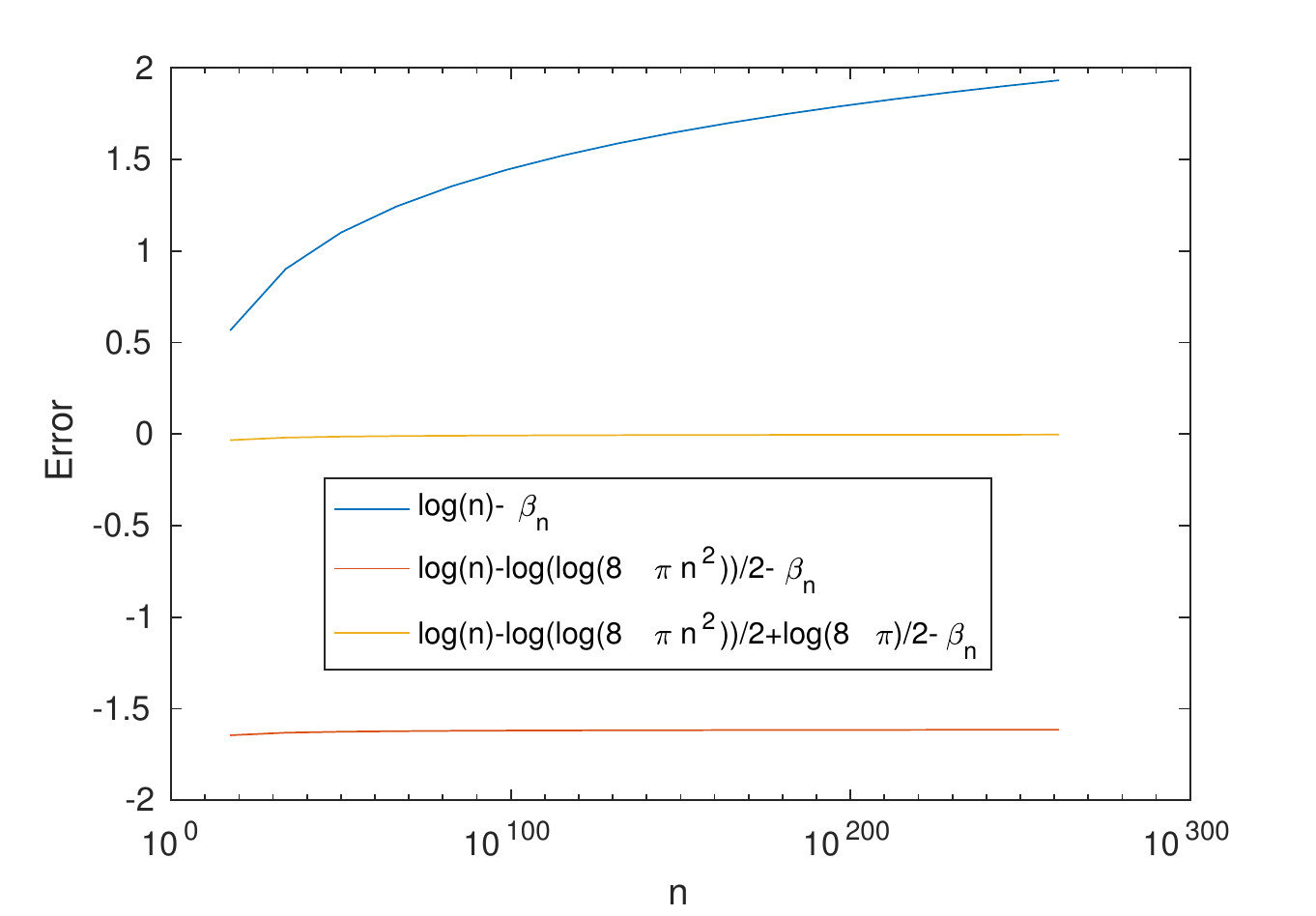} }
\subfloat{ \includegraphics[width = 0.49\textwidth]{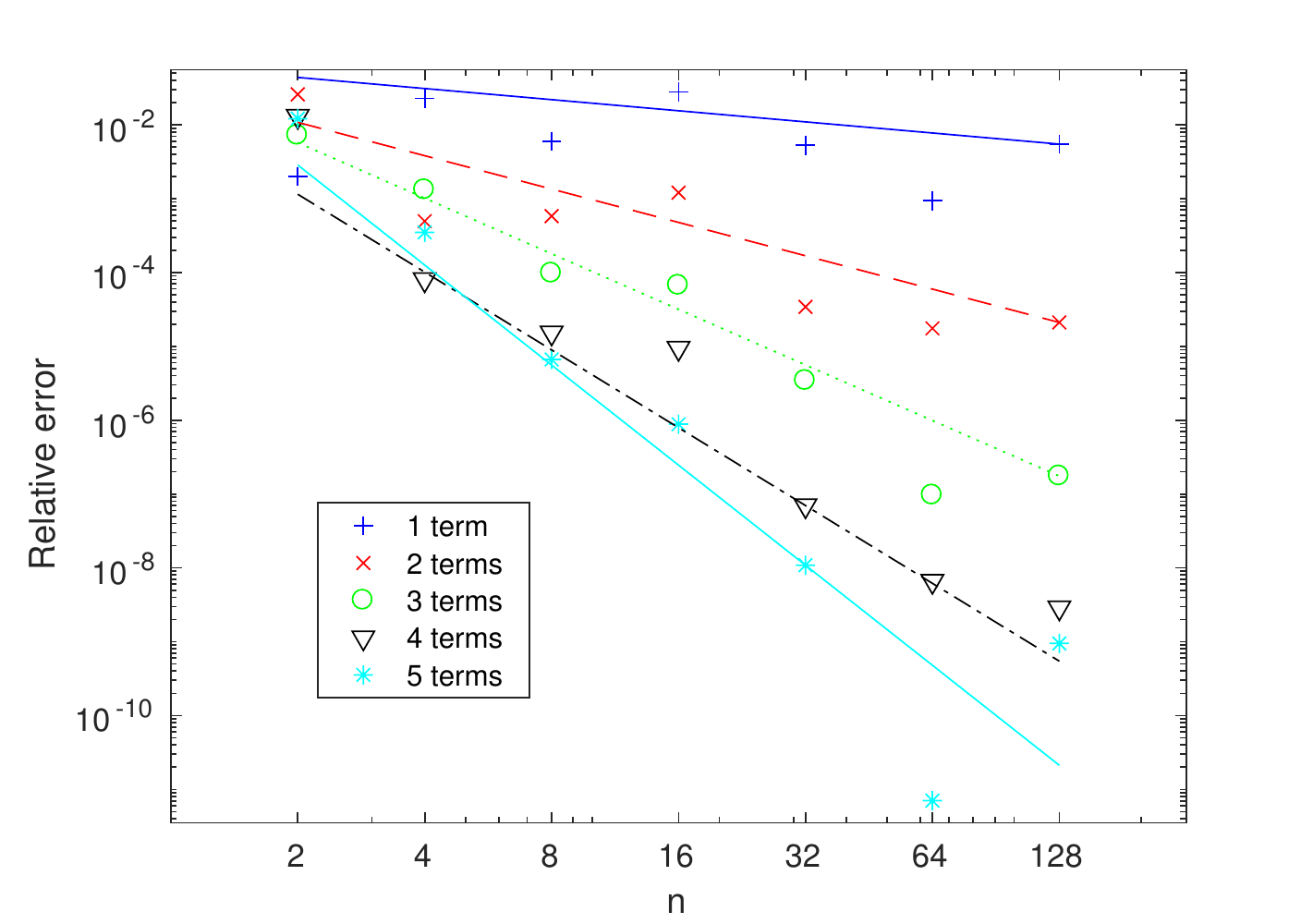} } 
\caption{Error of the asymptotic expansion of $\beta_n$ (left) and $p_{n}(0.6\beta_n)$ (right) for $w(x) = x^{-1/2}\exp(-\exp(x))$. }
\label{Fexp}
\end{figure}

\subsection{General polynomial $Q(x)$ also used as a general function} \label{SnumerGen}

For this experiment, we recall \cref{Rgen,RgenCont}, which state that one can use the procedure for general functions also for polynomial $Q(x)$. \Cref{Fgen} provides a comparison of the accuracy obtained for $Q(x) = x^6 -2.1x^5 + 3x^3 -6x^2 +9$. The procedure for general polynomials would need about $m=6$ times more terms to achieve the same order in $n$ of the asymptotic expansion than the procedure for general functions. However, where the number of terms is too low in the left part of the figure, we appear to see a divergence. This is because the accuracies of the phase function $f_n(z)$ and the MRS number $\beta_n$ are too low to cancel out the exponential behaviour $\exp(n(V_n(z) + l_n)/2)$ 
in \cref{Epiboun} when calculated with too few terms. The reference recurrence coefficients were again computed using 26 digits, but we appear to see an $\mathcal{O}(n)$ error in the right panel of \cref{Fgen} at low errors. This means that we would need even more digits for this more difficult weight function if 
we would need to see the convergence for the expansions with more than three terms and all $n$. However, computing all recurrence coefficients is a very time consuming operation, while using asymptotics can be more accurate and orders of magnitude faster.

\begin{figure}[h]
\centering
\subfloat{ \includegraphics[width = 0.49\textwidth]{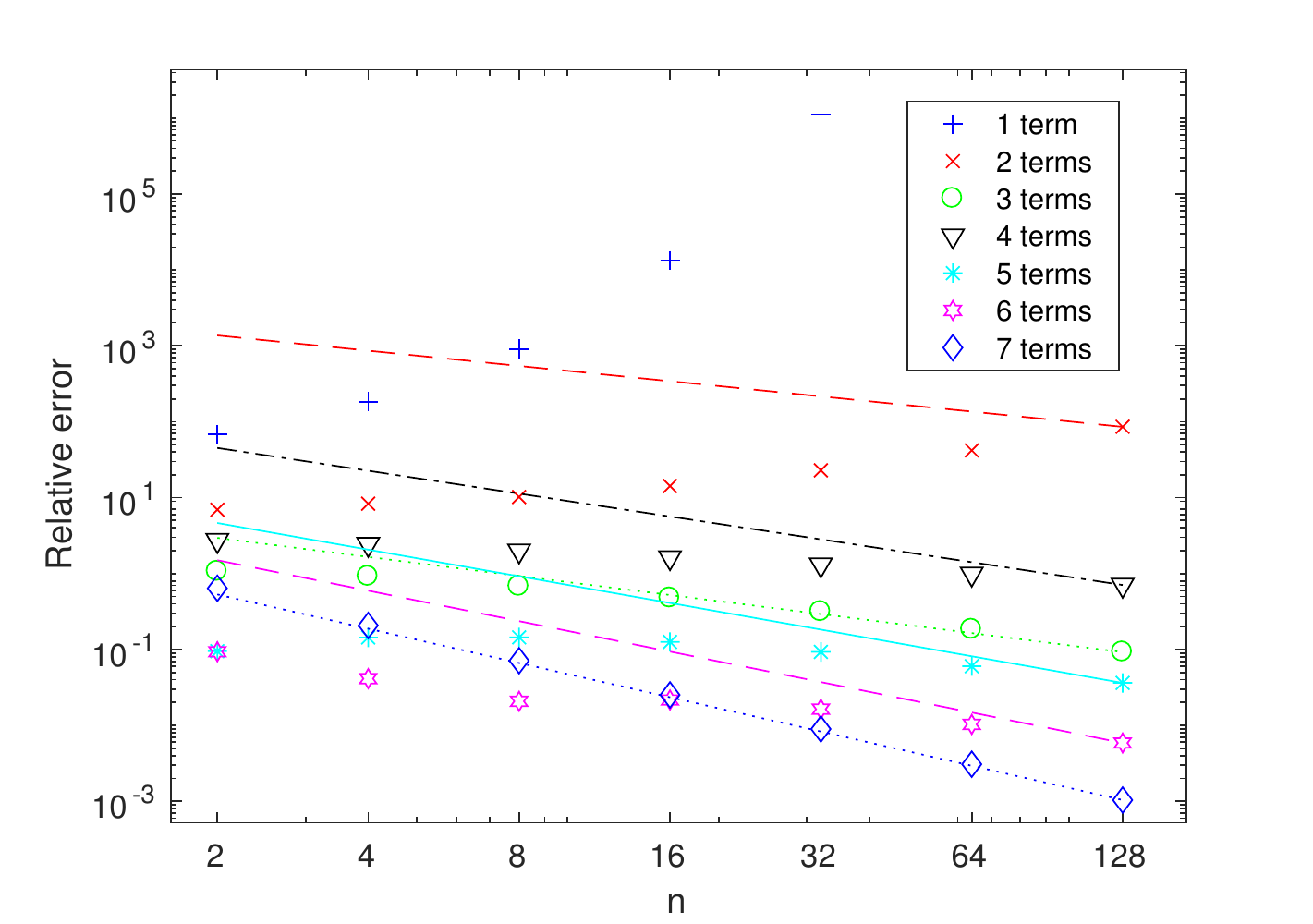} } 
\subfloat{ \includegraphics[width = 0.49\textwidth]{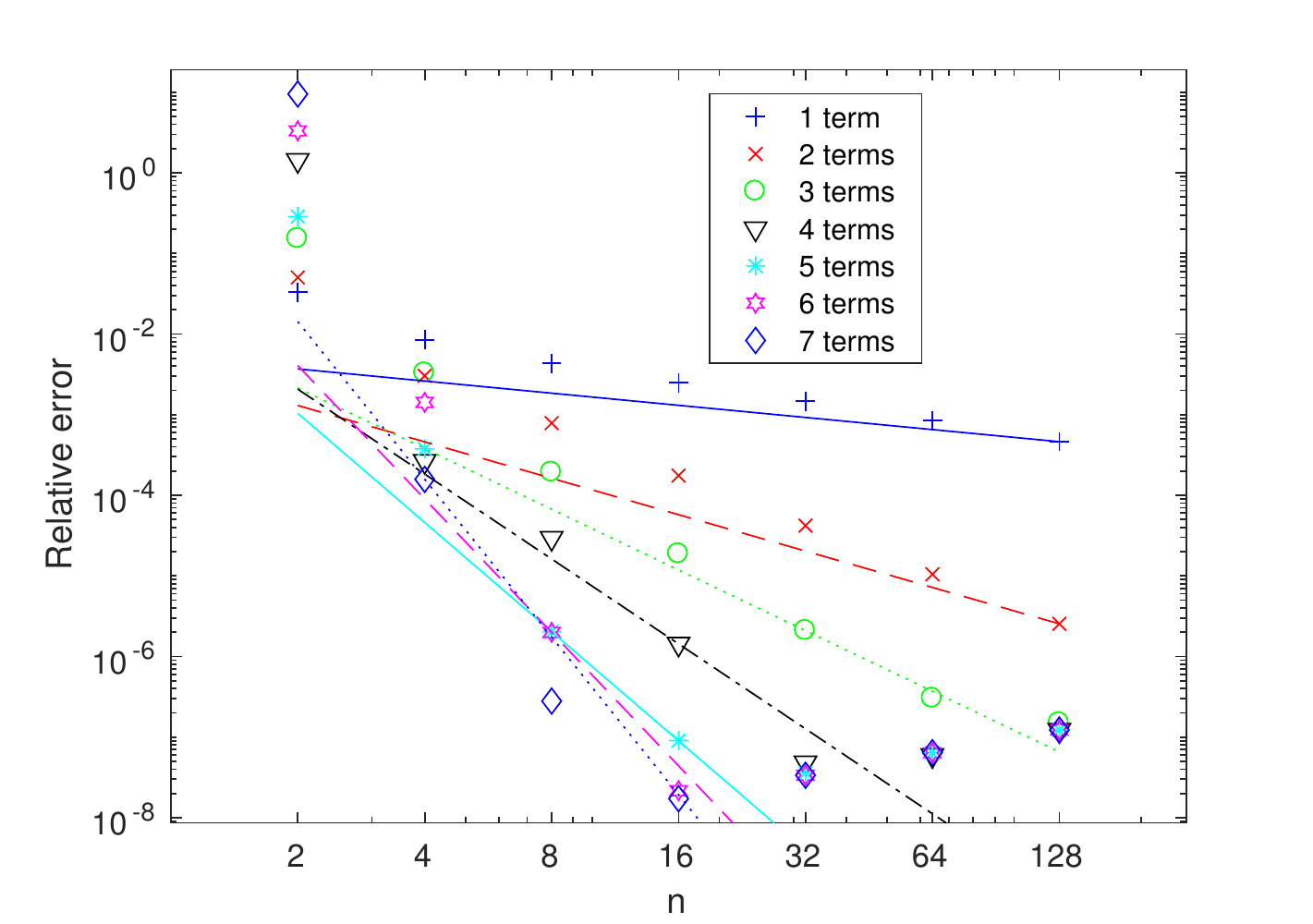} } 
\caption{Relative error of the asymptotic expansion of $p_{n}((0.99+0.02i)\beta_n)$ in the right disk using the $Q$-matrices for $w(x) = x^{-1.1}\exp(-x^6 +2.1x^5 - 3x^3 +6x^2 -9)$ using the procedure for general polynomials $Q(x)$ (left) and general functions $Q(x)$ (right). Note that these are not directly comparable, as each term gives a $\mathcal{O}(n^{-T/m})$ error in the left part, while this is $\mathcal{O}(n^{-T})$ for the procedure for general polynomials.}
\label{Fgen}
\end{figure}

The procedure for general functions thus provides a higher accuracy (for the same number of terms), also because $\beta_n$ is computed up to an accuracy independent of $n$. \Cref{Tgen} shows that also the precomputations (computing the $U$ and $Q$ matrices as in \cref{S:higherorderterms,SexplL}) are faster using Matlab2016b on a $64$-bit laptop with $7.7$ GB memory and $4$ Intel(R) Core(TM) i7-3540M CPU's at $3.0$ Ghz. However, the mean time over the values of $n$ in \cref{Fgen} needed for evaluating the polynomial is much higher because the procedure for general functions computed double numerical integrals, as mentioned in \cref{SmrsNonpoly}. Thus, the procedure for general polynomial $Q(x)$ can be preferable when many evaluations of the polynomial are needed. The first row of \cref{Tgen} also shows that the time to evaluate the polynomial grows with the number of terms: the complexity is $\mathcal{O}(T^2)$ through \cref{ERQnonmon} and \cref{asympRn} if both indices $n$ and $k$ are proportional to $T$. The 
precomputations require $\mathcal{O}(T^5)$ operations for general polynomials due to the double summation for $g_{k,n}^l$ in \cref{SprecLnonmon}. For general functions however, that derivation of higher order terms has to be repeated for each value of $n$.

\begin{table}[h] 
\centering 
\hspace*{0.0cm} \begin{tabular}{l|cccc} 
Time (s) & Precomputations & $T=1$ & $T=4$ & $T=7$ \\ 
\hline 
General polynomials & 5.73e0   & 4.69e-3 & 6.95e-3 & 8.15e-3 \\ 
General functions   & 1.96e-1  & 2.24e-1 & 2.24e-1 & 2.24e-1
\end{tabular} 
\caption{Time required for different operations in seconds: mean time over seven values of $n$ of precomputations for seven terms in the expansion (only once for the procedure for general polynomials) and evaluating the asymptotic expansion for different numbers of terms $T$.}
\label{Tgen} 
\end{table}

\section*{Acknowledgments}
The authors gratefully acknowledge financial support from FWO (Fonds Wetenschappelijk Onderzoek, Research Foundation - Flanders, Belgium), through FWO research projects G.0617.10, G.0641.11 and G.A004.14. The authors would like to thank Alfredo Dea\~{n}o, Arno Kuijlaars, Alex Townsend, Walter Van Assche and Marcus Webb for useful discussions on the topic of this paper.

\begin{appendix}
\section{Explicit formulas for the first higher order terms} \label{APP:explicit}

The recursive computation of $R_k$ can give an arbitrary number of terms, but we provide the first few terms explicitly here for $z$ outside the two disks, ignoring the procedure for general polynomials. Expressions for $R^{\L/\R}(z)$ can straightforwardly be obtained by identifying $R_k^{\O}(z)$ from the following and using \cref{RHPforRk,asympRnmon}. We have:
\begin{align}
	R^{\O}(z) & = I + \frac{1}{n}\left(\frac{U_{1,1}^{\R} }{z-1}+\frac{U_{1,2}^{\R} }{(z-1)^2}+\frac{U_{1,1}^{\L}}{z} \right) + \frac{1}{n^2}\left(\frac{U_{2,1}^{\R} }{z-1}+\frac{U_{2,2}^{\R} }{(z-1)^2}+\frac{U_{2,3}^{\R} }{(z-1)^3}+\frac{U_{2,1}^{\L}}{z} \right)  \nonumber \\
	& + \frac{1}{n^3}\left(\frac{U_{3,1}^{\R} }{z-1} + \frac{U_{3,2}^{\R} }{(z-1)^2} + \frac{U_{3,3}^{\R} }{(z-1)^3} + \frac{U_{3,4}^{\R} }{(z-1)^4} + \frac{U_{3,5}^{\R} }{(z-1)^5} +\frac{U_{3,1}^{\L}}{z} + \frac{U_{3,2}^{\L}}{z^2} \right) +\mathcal{O}\left(\frac{1}{n^4}\right), \nonumber
\end{align}
with for general $Q(x)$
\begin{align}
	U_{1,1}^{\L} = & \frac{4\alpha^2-1}{2^{4}d_0 }\begin{pmatrix} 1 & 4^{-\alpha} i \\ 4^\alpha i & -1 \end{pmatrix}, \nonumber \\
	U_{1,1}^{\R} = & \frac{1}{2^{4} 3 c_0^2}\begin{pmatrix} -3(4\alpha^2c_0 - c_0 - c_1) & (12\alpha^2 c_0 + 24\alpha c_0 + 11c_0 - 3c_1)4^{-\alpha} i \\ (12 \alpha^2 c_0 - 24\alpha c_0 + 11c_0 - 3c_1)4^\alpha i & 3(4\alpha^2 c_0 - c_0 - c_1) \end{pmatrix}, \nonumber \\
	U_{1,2}^{\R} = & \frac{5}{2^{4} 3 c_0 }\begin{pmatrix} -1 & i4^{-\alpha} \\ 4^\alpha i & 1 \end{pmatrix}, \nonumber \\
	U_{2,1}^{\L} = & \frac{4\alpha^2-1}{2^{7} 3 c_0^2d_0^2 }\begin{pmatrix} \text{\foreignlanguage{russian}{ш}}(\alpha)  & \text{\foreignlanguage{russian}{ъ}}(\alpha)4^{-\alpha} i \\  -\text{\foreignlanguage{russian}{ъ}}(-\alpha)4^{\alpha} i  &  \text{\foreignlanguage{russian}{ш}}(-\alpha) \end{pmatrix}, \nonumber \\
	\text{\foreignlanguage{russian}{ш}}(b) = & 12 b^2 c_0 d_0 - 24 b c_0^2 + 12 b c_0 d_0 - c_0 d_0 - 3c_1 d_0, \\
	\text{\foreignlanguage{russian}{ъ}}(b) = & 12 b^2 c_0^2 + 12 b^2 c_0 d_0 - 24 b c_0^2 + 12 b c_0 d_0 - 27 c_0^2 - c_0 d_0 - 3c_1d_0, \\
	U_{2,1}^{\R} = & \frac{1}{2^{7} 3^{2} c_0^4 d_0 }\begin{pmatrix} -3\text{\foreignlanguage{russian}{л}}(\alpha) & \text{\foreignlanguage{russian}{б}}(-\alpha)4^{-\alpha} i \\ -\text{\foreignlanguage{russian}{б}}(\alpha)4^\alpha i & -3\text{\foreignlanguage{russian}{л}}(-\alpha) \end{pmatrix}, \nonumber \\
	\text{\foreignlanguage{russian}{л}}(b) = & 48 b^4 c_0^3 - 48 b^3c_0^3 - 96 b^3c_0 c_1 d_0 - 16 b^2 c_0^3 - 12 b^2 c_0^2 c_1 + 12 b c_0^3  \\
	&  + 24 b c_0 c_1 d_0 + 144 b c_1^2 d_0 - 120 b c_0 c_2 d_0 + c_0^3 + 3 c_0^2 c_1,  \\
	\text{\foreignlanguage{russian}{б}}(b) = & 144 b^4 c_0^3 + 144 b^4 c_0^2 d_0 - 144 b^3 c_0^3 - 384 b^3 c_0^2 d_0 + 288 b^3 c_0 c_1 d_0 - 48 b^2 c_0^3 - 36 b^2 c_0^2 c_1 + 264 b^2 c_0^2 d_0 - 936 b^2 c_0 c_1 d_0 \\
	&  + 36 b c_0^3 + 936 b c_0 c_1 d_0 - 432 b c_1^2 d_0 + 360 b c_0 c_2 d_0 + 3c_0^3 + 9c_0^2 c_1 - 23 c_0^2 d_0 - 282 c_0 c_1 d_0 - 441 c_1^2 d_0 - 360 c_0 c_2 d_0, \\
	U_{2,2}^{\R} = & \frac{1}{2^{7} 3^{2}c_0^3 d_0 }\begin{pmatrix} \text{\foreignlanguage{russian}{ф}}(-\alpha)  & \text{\foreignlanguage{russian}{ь}}(\alpha)4^{-\alpha} i \\ -\text{\foreignlanguage{russian}{ь}}(-\alpha)4^\alpha i & \text{\foreignlanguage{russian}{ф}}(\alpha)  \end{pmatrix}, \nonumber \\
	\text{\foreignlanguage{russian}{ф}}(b) = & 240 b^3 c_0 d_0 - 60 b^2 c_0^2 + 60 b^2 c_0 d_0 - 36 b c_0 d_0 -468 b c_1 d_0 + 15c_0^2 - 28 c_0 d_0 + 21 c_1 d_0, \\
	\text{\foreignlanguage{russian}{ь}}(b) = & 240 b^3 c_0 d_0 + 60 b^2 c_0^2 + 780 b^2 c_0 d_0 + 804 b c_0 d_0 -468 b c_1 d_0 - 15 c_0^2 + 259 c_0 d_0 - 483 c_1 d_0, \\
	U_{2,3}^{\R} = & \frac{35}{2^{7} 3^{2} c_0^2 }\begin{pmatrix} -12\alpha - 1 & 3(\alpha + 1)4^{1-\alpha} i \\ 3(\alpha - 1)4^{\alpha + 1} i & 12\alpha - 1 \end{pmatrix}. \nonumber
\end{align}
This agrees with results by Vanlessen: \cite[(4.11)]{Vanlessen} equals $U_{1,1}^{\R}+U_{1,1}^{\L}$ and \cite[(4.12)]{Vanlessen} equals $\left.U_{1,1}^{\R}+U_{1,2}^{\R}\right|_{2,1}$. For $w(x) = x^\alpha \exp(-x)$, one can use $c_0 = 4 = d_0$, $c_1 = 0 = c_2$ and the next higher order term is given by
\begin{align}
	U_{3,1}^{\L} = & \frac{(4\alpha^2-1)}{2^{17} 3^{2} }\begin{pmatrix} 
\text{\foreignlanguage{russian}{я}}(\alpha) & \text{\foreignlanguage{russian}{и}}(\alpha)4^{-\alpha} i \\ \text{\foreignlanguage{russian}{и}}(-\alpha) 4^\alpha i & -\text{\foreignlanguage{russian}{я}}(-\alpha) \end{pmatrix}, \nonumber \\
	\text{\foreignlanguage{russian}{я}}(b) = & 288 b^4 - 960 b^3 + 444 b^2 + 768 b - 305, \\
	\text{\foreignlanguage{russian}{и}}(b) = & 768 b^4 - 1824 b^3 - 1284 b^2 + 2712b + 1153, \\
	U_{3,2}^{\L} = & \frac{(4\alpha^2-1)(4\alpha^2 - 9)(4\alpha^2 - 25)}{2^{17} 3 }\begin{pmatrix} -1 & -i4^{-\alpha} \\ -4^\alpha i & 1 \end{pmatrix}, \nonumber \\
	U_{3,1}^{\R} = & \frac{1}{2^{17} 3^{4} 5}\begin{pmatrix} -\text{\foreignlanguage{russian}{щ}}(\alpha) & \text{\foreignlanguage{russian}{д}}(\alpha)4^{-\alpha} i \\ \text{\foreignlanguage{russian}{д}}(-\alpha)4^\alpha i & \text{\foreignlanguage{russian}{щ}}(-\alpha) \end{pmatrix}, \nonumber \\
	\text{\foreignlanguage{russian}{щ}}(b) = & 45(288 b^4 - 960 b^3 + 444 b^2 + 768 b - 305)(4\alpha^2 - 1), \\
	\text{\foreignlanguage{russian}{д}}(b) = & 138240 b^6 + 51840 b^5 - 287280 b^4 - 109440 b^3 + 103320 b^2 + 29880 b - 11603, \\
	U_{3,2}^{\R} = & \frac{1}{2^{16} 3^{4} 5 }\begin{pmatrix} -\text{\foreignlanguage{russian}{ц}}(-\alpha) & 2i4^{-\alpha} \text{\foreignlanguage{russian}{ы}}(\alpha) \\ 2i4^\alpha \text{\foreignlanguage{russian}{ы}}(-\alpha) & \text{\foreignlanguage{russian}{ц}}(\alpha) \end{pmatrix}, \nonumber \\
	\text{\foreignlanguage{russian}{ц}}(b) = & 4320 b^6 - 51840 b^5 - 64800 b^4 + 33120 b^3 + 13590 b^2 - 5760 b + 389, \\
	\text{\foreignlanguage{russian}{ы}}(b) = & 2160 b^6 + 56160 b^5 + 156600 b^4 + 119520 b^3 + 20655 b^2 - 7470 b - 1109, \\
	U_{3,3}^{\R} = & \frac{1}{2^{17} 3^{4} 5}\begin{pmatrix} -\text{\foreignlanguage{russian}{з}}(-\alpha) & 3i4^{-\alpha} \text{\foreignlanguage{russian}{ю}}(\alpha) \\ 3i4^\alpha\text{\foreignlanguage{russian}{ю}}(-\alpha) & \text{\foreignlanguage{russian}{з}}(\alpha) \end{pmatrix}, \nonumber \\
	\text{\foreignlanguage{russian}{з}}(b) = & 226800 b^4 - 100800 b^3 + 78120 b^2 - 19633, \\
	\text{\foreignlanguage{russian}{ю}}(b) = & 75600 b^4 + 403200 b^3 + 626640 b^2 + 434280 b + 114089, \\
	U_{3,4}^{\R} = & \frac{1}{2^{16} 3^{4} }\begin{pmatrix} -90090\alpha^2 - 12012 & 1001(90\alpha^2 + 180\alpha + 107)4^{-\alpha} i \\ 1001(90\alpha^2 - 180\alpha + 107)4^{\alpha} i & 90090\alpha^2 + 12012 \end{pmatrix}, \nonumber \\
	U_{3,5}^{\R} = & \frac{5\times 7\times 11 \times 13 \times 17}{2^{17} 3^{4} }\begin{pmatrix} -1 & i4^{-\alpha} \\ i 4^\alpha & 1 \end{pmatrix}. \nonumber 
\end{align}

\end{appendix}

\bibliographystyle{abbrv}
\bibliography{laguerre10.bbl}

\begin{thebibliography}{10}

\bibitem{BogaertIterationFree}
I.~Bogaert.
\newblock {Iteration-Free Computation of Gauss--Legendre Quadrature nodes and
  weights}.
\newblock {\em SIAM J. Sci. Comput.}, 36(3):A1008--A1026, 2014.

\bibitem{bogaert}
I.~Bogaert, B.~Michiels, and J.~Fostier.
\newblock {$\mathcal{O}(1)$ computation of {L}egendre polynomials and
  {G}auss-{L}egendre nodes and weights for parallel computing}.
\newblock {\em SIAM J. Sci. Comput.}, 34(3):C83--C101, 2012.

\bibitem{varying}
C.~Bosbach and W.~Gawronski.
\newblock Strong asymptotics for {L}aguerre polynomials with varying weights.
\newblock {\em J. Comput. Appl. Math.}, 99:77--89, 1998.

\bibitem{bremer}
J.~Bremer.
\newblock On the numerical calculation of the roots of special functions
  satisfying second order ordinary differential equations.
\newblock {\em SIAM J. Sc. Comput.}
\newblock To appear.

\bibitem{alfLag}
A.~Dea\~{n}o, E.~J. Huertas, and F.~Marcell\'an.
\newblock Strong and ratio asymptotics for {L}aguerre polynomials revisited.
\newblock {\em J. Math. Anal. Appl.}, 403:477--486, 2013.

\bibitem{jacobi}
A.~Dea\~{n}o, D.~Huybrechs, and P.~Opsomer.
\newblock {Construction and implementation of asymptotic expansions for
  Jacobi-type orthogonal polynomials}.
\newblock {\em Adv. Comput. Math.}, 42(4):791--822, 2016.

\bibitem{DKMVZ}
P.~Deift, T.~Kriecherbauaer, K.~T.-R. McLauglin, S.~Venakides, and X.~Zhou.
\newblock Strong asymptotics of orthogonal polynomials with respect to
  exponential weights.
\newblock {\em Comm. Pure Appl. Math.}, 52(12):1491--1552, 1999.

\bibitem{DZ}
P.~Deift and X.~Zhou.
\newblock {A Steepest Descent Method for Oscillatory Riemann--Hilbert
  Problems}.
\newblock {\em Bull. Amer. Math. Soc.}, 26(1):119--124, 1992.

\bibitem{dzsd}
P.~Deift and X.~Zhou.
\newblock {A steepest descent method for oscillatory Riemann--Hilbert problems.
  Asymptotics for the MKdV equation}.
\newblock {\em Ann. Math.}, 137:295--368, 1993.

\bibitem{fik}
A.~Fokas, A.~Its, and A.~Kitaev.
\newblock The isomonodromy approach to matrix models in 2d quantum gravity.
\newblock {\em Comm. Math. Phys.}, 147:395--430, 1992.

\bibitem{gil2016}
A.~Gil, J.~Segura, and N.~Temme.
\newblock Efficient computation of {L}aguerre polynomials.
\newblock {\em Computer Physics Communications}, 210:124--131, 2017.

\bibitem{glaser}
A.~Glaser, X.~Liu, and V.~Rokhlin.
\newblock {A fast algorithm for the calculation of the roots of special
  functions}.
\newblock {\em SIAM J. Sci. Comput.}, 29(4):1420--1438, 2007.

\bibitem{Gosper}
R.~W. Gosper.
\newblock {Decision procedure for indefinite hypergeometric summation}.
\newblock {\em Proc. Natl. Acad. Sci. USA}, 75(1):40--42, 1978.

\bibitem{HT}
N.~Hale and A.~Townsend.
\newblock Fast and accurate computation of {G}auss--{L}egendre and
  {G}auss--{J}acobi quadrature nodes and weights.
\newblock {\em SIAM J. Sci. Comput.}, 35:A652--A672, 2013.

\bibitem{KuijLect}
A.~Kuijlaars.
\newblock {\em Orthogonal polynomials and Special functions}, volume 1817 of
  {\em Lecture Notes in Mathematics}, chapter Riemann-Hilbert analysis for
  orthogonal polynomials, pages 167--210.
\newblock Springer-Verlag, New York, 2003.
\newblock Editors: E. Koelink and W. Van Assche.

\bibitem{KMcLVAV}
A.~B.~J. Kuijlaars, K.~T.-R. McLaughlin, W.~{Van Assche}, and M.~Vanlessen.
\newblock {The {R}iemann-{H}ilbert approach to strong asymptotics of orthogonal
  polynomials on $[-1,1]$}.
\newblock {\em Adv. Math.}, 188:337--398, 2004.

\bibitem{Levin2001weights}
E.~Levin and D.~Lubinsky.
\newblock {\em Orthogonal Polynomials for Exponential Weights}.
\newblock Springer, New York, 2001.

\bibitem{temme2004}
J.~L. L{\'o}pez and N.~M. Temme.
\newblock Convergent asymptotic expansions of {C}harlier, {L}aguerre and
  {J}acobi polynomials.
\newblock {\em {Proceedings of the Royal Society of Edinburgh Section A:
  Mathematics}}, 134:537--555, 2004.

\bibitem{DLMF}
NIST.
\newblock {NIST Digital Library of Mathematical Functions}.
\newblock http://dlmf.nist.gov/, 2016.
\newblock Online companion to \cite{Olver:2010:NHMF}.

\bibitem{Olver:2010:NHMF}
F.~W.~J. Olver, D.~W. Lozier, R.~F. Boisvert, and C.~W. Clark, editors.
\newblock {\em {NIST Handbook of Mathematical Functions}}.
\newblock Cambridge University Press, New York, NY, 2010.
\newblock Print companion to \cite{DLMF}.

\bibitem{ninesLag}
P.~Opsomer.
\newblock Asymptotic expansions of generalized {L}aguerre polynomials.
\newblock http://nines.cs.kuleuven.be/software/LAGUERRE, 2016.

\bibitem{Szego}
G.~Szeg\H{o}.
\newblock {\em {Orthogonal Polynomials: American Mathematical Society
  Colloquium publications Volume XXIII}}.
\newblock American Mathematical Society, Providence, Rhode Island, 3 edition,
  1967.

\bibitem{temme1990}
N.~M. Temme.
\newblock Asymptotic estimates for {L}aguerre polynomials.
\newblock {\em Journal of {A}pplied {M}athematics and {P}hysics {(ZAMP)}},
  41:114--126, 1990.

\bibitem{chebfun}
{The University of Oxford} and the Chebfun~Developers.
\newblock {Chebfun—numerical computing with functions}.
\newblock http://www.chebfun.org/, 2016.

\bibitem{FastGaussQuadr}
A.~Townsend.
\newblock {FastGaussQuadrature}.
\newblock https://github.com/ajt60gaibb/FastGaussQuadrature.jl, 2016.

\bibitem{TTOGauss}
A.~Townsend, T.~Trogdon, and S.~Olver.
\newblock {Fast computation of Gauss quadrature nodes and weights on the whole
  real line}.
\newblock {\em IMA J. Numer. Anal.}, 2015.

\bibitem{trefBau}
L.~N. Trefethen and D.~Bau.
\newblock {\em Numerical linear algebra}.
\newblock Society for Industrial and Applied Mathematics, Philadelphia, 1997.

\bibitem{Vanlessen}
M.~Vanlessen.
\newblock Strong asymptotics of {L}aguerre-type orthogonal polynomials and
  applications in random matrix theory.
\newblock {\em Constr. Approx.}, 25:125--175, 2007.

\bibitem{ZhaoPartition}
Y.~Zhao, L.~Cao, and D.~Dai.
\newblock Asymptotics of the partition function of a laguerre-type random
  matrix model.
\newblock {\em J. Approx. Theory}, 178:64--90, 2014.

\end{thebibliography}

\end{document}